\definecolor{webgreen}{rgb}{0,.5,0}
\definecolor{webbrown}{rgb}{.6,0,0}
\g@addto@macro\bfseries{\boldmath}
\newcommand{\seqnum}[1]{\href{https://oeis.org/#1}{\rm \underline{#1}}}
\newcommand{\ord}{\mathrm{ord}_p}
\newcommand{\inc}{\mathrm{inc}_p}
\newcommand{\midd}{\; || \;}
\newcommand{\pfloor}[1]{\lfloor #1 \rfloor_p}
\theoremstyle{plain}
\newtheorem{theorem}{Theorem}
\newtheorem{corollary}[theorem]{Corollary}
\newtheorem{lemma}[theorem]{Lemma}
\theoremstyle{definition}
\newtheorem{definition}[theorem]{Definition}
\newtheorem{example}[theorem]{Example}
\newtheorem{conjecture}[theorem]{Conjecture}
\newtheorem{remark}[theorem]{Remark}
\begin{document}

\begin{center}
\vskip 1cm{\LARGE\bf 
The Arithmetic Partial Derivative}
\vskip 1cm
\large
Brad Emmons and Xiao Xiao\\
Department of Mathematics\\
Utica University \\
Utica, NY 13502 \\
U.S.A. \\
\href{mailto:bemmons@utica.edu}{\tt bemmons@utica.edu} \\
\href{mailto:xixiao@utica.edu}{\tt xixiao@utica.edu}
\end{center}

\vskip 0.2in

\begin{abstract}
The arithmetic partial derivative (with respect to a prime $p$) is a function from the set of integers that sends $p$ to 1 and satisfies the Leibniz rule. In this paper, we prove that the $p$-adic valuation of the sequence of higher order partial derivatives is eventually periodic. We also prove a criterion to determine when an integer has integral anti-partial derivatives. As an application, we show that there are infinitely many integers with exactly $n$ integral anti-partial derivatives for any nonnegative integer $n$.
\end{abstract}

\section{Introduction}
Let $p$ be a prime number. We recall the $p$-adic valuation $\ord: \mathbb{Z} \to \mathbb{N} \cup \{+\infty\}$. For any $x \in \mathbb{Z}$, we have
\begin{displaymath}
    \ord(x) = \begin{cases}
        \max\{\ell \in \mathbb{N} : p^{\ell} \mid x\}, & \text{if $x \neq 0$};\\
        +\infty, & \text{if $x=0$.}
        \end{cases}
\end{displaymath}
In other words, if $x \neq 0$, then $\ord(x)$ is the largest integer $\ell$ such that $p^{\ell} \mid x$. We sometimes denote this by $p^{\ell} \midd x$. 

The arithmetic derivative of a nonzero integer $x$ is a function $D: \mathbb{Z} \backslash \{0\} \to \mathbb{Z}$ defined as 
\[ D(x) = x \sum_{p | x}  \frac{\ord(x)}{p}.\]
If $x = p^k$ is a prime power, then 
\[D(p^k) = p^k \cdot \frac{k}{p} = kp^{k-1}.\]
This shows that $D$ mimics the power rule for differentiable functions on prime powers. One can check that $D$ satisfies the Leibniz rule $D(xy) = xD(y) + yD(x)$, and sends every prime to $1$. By the Leibniz rule, we are able to extend $D$ uniquely to $\mathbb{Z}$ by defining $D(0) = 0$. 

Ufnarovski and {\AA}hlander \cite{Ufnarovski} made several conjectures related to this arithmetic derivative, two of which we will investigate here.  The first conjecture relates to the dynamical system by considering higher order arithmetic derivatives of an integer. We denote by $D^i(x)$ the $i$-th arithmetic derivative of $x$ for $i \geq 1$. 

We note that the first conjecture that we would like to address is not correct as stated. The original conjecture \cite[Conjecture 2]{Ufnarovski} states that for any positive integer $x$, exactly one of the following could happen: either $D^i(x) = 0$ for sufficiently large $i$, $\displaystyle \lim_{i \to +\infty} D^i(x) = +\infty$, or $x = p^p$ for some prime $p$. The fact that $D(1647082) = 823543 = 7^7$ means that it is not quite true.  We modify it slightly here to the following conjecture. 

\begin{conjecture} \label{conjecture:UA3}
For any positive integer $x$, exactly one of the following could happen: either $D^i(x) = 0$ or $p^p$ for some prime $p$ for sufficiently large $i$, or $\displaystyle \lim_{i \to +\infty} D^i(x) = +\infty$.
\end{conjecture}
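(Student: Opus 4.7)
The plan is to establish the trichotomy in two independent stages. In the first stage, I classify the fixed points of $D$ and show that every bounded forward orbit eventually stabilizes at one of them. In the second stage, I show that any orbit which is not eventually bounded must tend to $+\infty$, ruling out oscillation between large and small values.

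For stage one, the first step is to determine the fixed points of $D$ on the positive integers. If $D(n) = n$ with $n = \prod_{i} p_i^{a_i}$, then $\sum_i a_i/p_i = 1$. Multiplying through by $P = \prod_i p_i$ and reducing modulo each $p_j$ forces $p_j \mid a_j$, so $a_j \geq p_j$ and hence $a_j/p_j \geq 1$; since the total sum is $1$ with all summands positive, only one term can appear, giving $n = p^p$. Together with $D(0) = 0$, these exhaust the fixed points. Because a bounded orbit of nonnegative integers is eventually periodic by pigeonhole, it then suffices to rule out cycles of length greater than one. For this I would invoke the main theorem of the paper: on a putative cycle $\{n_0, \ldots, n_{L-1}\}$, for every prime $p$ the sequence $\ord(n_i)$ must be exactly $L$-periodic. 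Combining these periodicities across every prime dividing any cycle element, together with the Leibniz formula $D(n) = n \sum_{p \mid n} \ord(n)/p$, should yield a highly over-determined system of $p$-adic congruences; the hope is to show this system is inconsistent unless $L = 1$.

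For stage two, the key observation is that $D(n) \geq n$ whenever $n$ has a prime factor $p$ with $\ord(n) \geq p$, with equality only at the fixed point $p^p$. Consequently, the only way an orbit can avoid strict growth is for all its terms to lie in the ``descent regime'' $S = \{n : \ord(n) < p \text{ for every prime } p \mid n\}$, in which $D(n)/n = \sum_{p \mid n} \ord(n)/p$ can be less than $1$. I would argue that repeated differentiation within $S$ eventually produces a factor with exponent equal to the corresponding prime, pushing the orbit out of $S$ into a regime of monotone growth, unless the orbit has already entered a cycle (excluded by stage one). A quantitative version of this---bounding the number of consecutive descent steps in terms of the size of $x$---would upgrade $\limsup D^i(x) = +\infty$ to $\lim D^i(x) = +\infty$.

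The principal obstacle is unambiguously the exclusion of non-trivial cycles in stage one. The eventual-periodicity theorem severely constrains the $p$-adic valuations inside a cycle but does not, by itself, forbid a hypothetical $2$-cycle $(m,n)$ with $D(m) = n$ and $D(n) = m$; converting the local $p$-adic data into a global contradiction appears to require substantively new ideas, and this difficulty is precisely why the statement is phrased as a conjecture. A successful attack would likely need finer information than the periodicity of each individual $\ord(D^i(x))$ in isolation, for instance tracking how valuations at different primes interact under a single application of $D$, or exploiting growth estimates for $\omega(D(n))$ along a cycle.
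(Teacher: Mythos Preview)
The statement you are attempting to prove is not proved in the paper; it is explicitly labelled a \emph{conjecture} about the full arithmetic derivative $D$, and the paper establishes only its analogue (Theorem~\ref{theorem:main2}) for the partial derivative $D_p$. So there is no ``paper's own proof'' to compare against. Your proposal is best read as a programme, and you yourself flag the main gap: ruling out nontrivial cycles. That gap is genuine and, as far as is known, open.

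Two further points are worth making. First, when you say you would ``invoke the main theorem of the paper'' to get periodicity of $\ord_p(n_i)$ along a putative $D$-cycle, you are conflating two different dynamical systems. Theorem~\ref{theorem:main1} concerns the sequence $\ord_p(D_p^i(x))$ under iteration of $D_p$, not $\ord_p(D^i(x))$ under iteration of $D$; these are unrelated in general. On an actual $D$-cycle the periodicity of each $\ord_p$ is automatic (the cycle itself is periodic), so the paper's theorem gives you nothing extra here; the difficulty is entirely in extracting a contradiction from the Leibniz relations, and that difficulty is untouched.

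Second, your Stage~2 also contains an unproved step. You correctly note that once $p^p \mid n$ for some prime $p$ the orbit is monotone nondecreasing thereafter, so an unbounded orbit that ever leaves your set $S$ tends to $+\infty$. But you have not shown that an unbounded orbit must leave $S$: it is conceivable a priori that $D^i(x)$ wanders unboundedly while every term satisfies $\ord_p(D^i(x)) < p$ for all $p$, in which case there is no monotonicity and $\limsup = +\infty$ does not upgrade to $\lim = +\infty$. Your sentence ``repeated differentiation within $S$ eventually produces a factor with exponent equal to the corresponding prime'' is exactly the missing lemma, and no argument is offered for it.
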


The second conjecture relates to the number of anti-derivatives of an integer. An integer $y$ is said to be an (integral) anti-derivative of $x$ if $D(y) = x$. Unlike anti-derivatives of functions, for any positive integer $x > 1$, $x$ has only finitely many anti-derivatives \cite[Corollary 3]{Ufnarovski}. We denote by $i(x)$ the number of anti-derivatives of an integer $x$.

\begin{conjecture} \label{conjecture:UA4} 
For each nonnegative integer $n$, the number of solutions to the equation $i(x) = n$ is infinite. 
\end{conjecture}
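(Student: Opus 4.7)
The plan is to produce, for each nonnegative integer $n$, an infinite family of integers $x$ with $i(x) = n$ by starting from a fixed ``seed'' $s_n$ satisfying $i(s_n) = n$ and then multiplying by arbitrarily large primes $P$. The guiding heuristic is that if $P$ is a prime much larger than every prime in $s_n$ and than every prime in any antiderivative of $s_n$, then antiderivatives of $P s_n$ should correspond bijectively to antiderivatives of $s_n$ via $y \mapsto y/P$; varying $P$ over the tail of the primes then yields infinitely many $x$ with $i(x)=n$.

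To carry out the multiplication step, suppose $D(y) = Ps_n$ and write $y = P^a z$ with $\gcd(P, z) = 1$. The Leibniz rule gives $D(y) = P^{a-1}(P\,D(z) + a z)$, so comparing $P$-adic valuations pins $a$ down to a short list of cases. The case $a = 1$ reduces to $D(z) = s_n$, giving the desired correspondence $y = Pz \leftrightarrow z$. The case $a \geq 2$ forces $P\,D(z) + a z = s_n$, which for $P > s_n$ leaves no positive solution $z$. The case $a = 0$ is the subtle one: it demands $P \mid D(z)$ with $P \nmid z$, and ruling this out requires an upper bound on $D(z)/z$ among candidate $z$, which can be supplied by the standard estimate that $D(z)/z$ is a sum of at most $\log_2 z$ reciprocals of primes dividing $z$, combined with the size constraint $D(z) = Ps_n$.

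For the seeds $s_n$ themselves I would proceed as follows. For $n = 0$, use the eventual periodicity of $\ord(D^i(x))$ established earlier in the paper to produce an infinite arithmetic progression of integers whose $p$-adic valuations are incompatible with the image of $D$; concretely, $\ord(D(y))$ is constrained by $\ord(y)$ and by the primes dividing $y$ in a way that forbids certain congruence classes modulo $p$. For $n \geq 1$, I would build $s_n$ recursively, attaching one new antiderivative at a time by appending a factor of the form $pq$ whose image $D(pq) = p+q$ lands on the previous seed, and then use the large-prime trick internally to prevent the appearance of unwanted extra antiderivatives.

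The main obstacle, and the reason Conjecture~\ref{conjecture:UA4} has resisted proof in the literature, is the exact control of the full list of antiderivatives of a given integer. While lower bounds $i(x) \ge n$ are constructive, the matching upper bound requires excluding every antiderivative $y$ not already on the list, and a priori $y$ may have arbitrarily many prime factors with $D(y)/y$ an arbitrarily small positive rational. The crux of the argument is therefore a uniform structural statement: either a bound on the smallest prime factor of any antiderivative of $x$ purely in terms of $x$, or a congruence-based classification of solutions to $D(y)/y = x/y$ via the $p$-adic machinery of this paper. This structural bound, rather than any of the Leibniz-rule calculations above, is the step I expect to be the genuine obstacle.
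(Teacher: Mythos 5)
The first thing to note is that the statement you are proving is Conjecture~\ref{conjecture:UA4}, which concerns the full arithmetic derivative $D$ and which the paper does \emph{not} prove: it is left open, and the paper instead establishes only the analogue for the arithmetic \emph{partial} derivative $D_p$ (Theorem~\ref{theorem:infinite_derivative} together with Example~\ref{example:no_antipd}), precisely because the full derivative keeps introducing new, uncontrollable prime factors. So there is no proof in the paper to compare yours against; the proposal must stand on its own, and it does not close.

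The concrete gap is the one you yourself flag, but it is worth saying why the large-prime trick does not repair it. In your case $a=0$ you must exclude $y=z$ with $\gcd(P,z)=1$ and $D(z)=Ps_n$. The proposed size argument gives $z \ge D(z)/\log_2 z = Ps_n/\log_2 z$, but the known upper bound on antiderivatives of $m$ only confines them to roughly $m^2/4$, so a $z$ coprime to $P$ of size anywhere between $Ps_n/\log_2(Ps_n)$ and $(Ps_n)^2/4$ is consistent with every constraint you impose; nothing forces $P\mid z$. Indeed $D$ can send an integer with only small prime factors to a multiple of a large prime (this is exactly the phenomenon the introduction warns about with $D(1647082)=7^7$), so multiplying the seed by $P$ can create brand-new antiderivatives coprime to $P$, and the claimed bijection $y\mapsto y/P$ is not established. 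The seeds $s_n$ are also never constructed: for $n\ge 1$ the recursive attachment of a factor $pq$ with $D(pq)=p+q$ hitting a prescribed target is a Goldbach-type demand, and the matching upper bound $i(s_n)\le n$ is again the same unresolved exclusion problem. In short, the proposal correctly locates the difficulty but does not overcome it; the conjecture remains open, and the paper's actual contribution is the $D_p$-analogue, where the absence of new primes makes the complete classification of anti-partial derivatives (Theorem~\ref{theorem:primitive} and Lemmas~\ref{lemma:define_k}--\ref{lemma:define_a}) tractable.
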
 

Part of what makes investigating the function $D(x)$ challenging is that the derivative of an integer $x$ will often have prime factors that are not factors of $x$ itself.  For instance, if $x = 72 = 2^3 \, 3^2$, then $D(x) = 156 = 2^2 \, 3^1 \, 13$, and we see that while the multiplicities of the prime factors $2$ and $3$ each decreased by $1$, the derivative $D(x)$ has a factor of $13$.  In particular, when investigating the dynamical system $D^i(x)$, we would have to keep track of new prime factors that may arise following each successive differentiation. If $x$ has more than one prime factor, predicting new prime factors of $D(x)$ relies on the ability of predicting prime factors of $a+b$ when knowing the prime factors of $a$ and $b$. A difficult conjecture that tries to tackle a question in this direction is the famous $abc$ conjecture. There is still ongoing debate in the mathematics community about whether $abc$ conjecture has been proved \cite{abc}. Loosely speaking, the $abc$ conjecture implies that when two numbers $a$ and $b$ are divisible by large powers of small primes, then $a + b$ tends to be divisible by small powers of large primes. Because of difficulties like these, we start by investigating these conjectures in the case of arithmetic partial derivatives with respect to a prime $p$, which was formally introduced by Kovi\v{c} \cite{Kovic}.

For any nonzero $x \in \mathbb{Z}$, the (arithmetic) partial derivative (with respect to $p$) is defined as
\[ D_p(x) = \frac{x}{p} \cdot \ord(x).\] 
One can check that $D_p$ satisfies the Leibniz rule $D_p(xy) = D_p(x)y + xD_p(y)$.  And to extend the definition of $D_p$ to $0$, we observe that for any nonzero $x \in \mathbb{Z}$
\[D_p(0) = D_p(0 \cdot x) = D_p(0) \cdot x + 0 \cdot D_p(x) = D_p(0) \cdot x.\]
This implies that $D_p(0) = 0$. If $p$ does not divide $x$, then $D_p(x) = 0$. For every integer $x$, the arithmetic derivative of $x$ is a finite sum of arithmetic partial derivative of $x$, that is,
\[D(x) = \sum_{p \mid x} D_p(x).\]

The main advantage that we have in exploring the dynamical system $D_p^i(x)$ over $D^i(x)$ is that now we do not need to keep track of any new primes that may arise when we differentiate.  This is because if $x = ap^{\ell}$, where $\ell = \ord(x)$, then $D_p(x) = aD_p(p^\ell)$, and so upon repeated differentiation, we only need to keep track of $\mbox{ord}_p(x)$ and its partial derivatives with respect to $p$. Any new prime factors that may arise when differentiating get absorbed into the coefficient.   

Because we do not have to deal with new primes occurring in our dynamical system, we are able to prove a stronger version of Conjecture \ref{conjecture:UA3} for the arithmetic partial derivative.  Theorem \ref{theorem:main1} provides an exact prescription of the $\ord$ sequence of $x$ when $\ord(x) \geq p$, and as a corollary we prove Theorem \ref{theorem:main2}, which is analogous to Conjecture \ref{conjecture:UA3}; see Section \ref{section:ord} for the definition of $\ord$ sequence.  We go on to prove in Theorem \ref{theorem:main_reverse} that given any possible $\ord$ sequence as prescribed in Theorem \ref{theorem:main1}, we are able to construct an integer $x$ with this $\ord$ sequence.  

For each integer $x$, we say that an integer $y$ is an (integral) anti-partial derivative of $x$ if $D_p(y) = x$. Let $i_p(x)$ be the number of anti-partial derivatives of $x$. Theorem \ref{theorem:infinite_derivative} and Example \ref{example:no_antipd} together show that for each nonnegative integer $n$, the number of solutions to the equation $i_p(x) = n$ is infinite, which is analogous to Conjecture \ref{conjecture:UA4}. We think the natural next step is to study analogous conjectures for the arithmetic subderivative $D_S$ defined by Haukkanen, Merikoski, and Tossavainen \cite{HMT2} in the following way:
\[ D_S(x) := x \sum_{p \in S}  \frac{\ord(x)}{p} = \sum_{p \in S}D_p(x), \]
where $S$ is a finite set of primes.

The definition of arithmetic derivative and arithmetic partial derivative can be generalized to rational numbers \cite{Ufnarovski, HMT1}. Haukkanen, Merikoski, and Tossavainen \cite[Conjecture 27]{HMT1} conjecture that for any nonzero rational number $x$, there are at most four rational anti-partial derivatives if $p=2$, and at most two rational anti-partial derivatives if $p> 2$. As a result of Theorem \ref{theorem:infinite_derivative}, we prove that this conjecture is false; see Remark \ref{remark:rational}. This also confirms Pandey and Saxena's earlier observations \cite{PS} that \cite[Conjecture 27]{HMT1} might be false.

\section{Period of the $\ord$ sequence} \label{section:ord}

We fix a prime $p$. For any $x \in \mathbb{Z}$, we refer to the sequence $x, D_p(x), D_p^2(x), \ldots $ as the $D_p$ sequence of $x$, and the sequence $\ord (x), \ord (D_p(x)), \ord (D_p^2(x)), \ldots $ as the $\ord$ sequence of $x$. In this section, we will study properties of $\ord$ sequences and show that they are eventually periodic. For the sake of completeness, we first give the definition of eventually periodic.

\begin{definition}
    Given a sequence $x_1, x_2, \ldots$, we say that it is \emph{eventually periodic} if there exist integers $m, L \geq 1$ such that for all $0 \leq j \leq L-1$ and $i \geq 1$, we have $x_{m+j+iL} = x_{m+j}$. If a sequence is eventually periodic with $m = 1$, then we say the sequence is \emph{periodic}. The smallest possible $L$ is called the \emph{period} of the sequence.
\end{definition}

If the $D_p$ sequence of $x$ is periodic (resp., eventually periodic), then the $\ord$ sequence of $x$ is periodic (resp., eventually periodic), but not  vice versa.

\begin{lemma} \label{lemma:basic1}
Let $x \in \mathbb{Z}$. If $x = 0$, then the $D_p$ sequence of $x$ (and thus the $\ord$ sequence of $x$) is periodic with period 1. If $\ord(x) < p$, then the $D_p$ sequence of $x$ (and thus the $\ord$ sequence of $x$) is eventually periodic with period 1.
\end{lemma}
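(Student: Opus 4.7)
The plan is to treat the two assertions separately, and for the second one to split further according to whether $p$ divides $x$.

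For the first part, the claim is immediate from the extension of $D_p$ to $0$ established in the introduction: since $D_p(0)=0$, the $D_p$ sequence of $0$ is the constant sequence $0,0,0,\ldots$, which is periodic with period $1$, so the $\ord$ sequence $+\infty, +\infty, \ldots$ is also periodic with period $1$.

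For the second part, assume $\ord(x) = \ell < p$. If $\ell = 0$, then $p \nmid x$, so $D_p(x) = \frac{x}{p}\cdot 0 = 0$, and the argument above applied to $D_p(x)$ shows the $D_p$ sequence becomes constantly $0$ from the second term on, yielding an eventually periodic sequence of period $1$. If $1 \le \ell < p$, write $x = ap^{\ell}$ with $p \nmid a$. Then
\[ D_p(x) = \frac{x}{p}\cdot \ell = \ell a p^{\ell-1}, \]
and since $1 \le \ell < p$ we have $p \nmid \ell$, so $p \nmid \ell a$, and hence $\ord(D_p(x)) = \ell - 1$.

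Iterating this calculation, I would show by a short induction on $k$ that for $0 \le k \le \ell$, $D_p^k(x) = \ell(\ell-1)\cdots(\ell-k+1)\,a\,p^{\ell-k}$, with $\ord(D_p^k(x)) = \ell - k$ because the coefficient $\ell!/(\ell-k)!\cdot a$ is a product of positive integers strictly less than $p$ together with $a$, all coprime to $p$. In particular $D_p^{\ell}(x) = \ell!\,a$ has $p$-adic valuation $0$, so $D_p^{\ell+1}(x) = 0$ and the $D_p$ sequence is constant from index $\ell+1$ onwards. This makes the sequence eventually periodic with period $1$. There is no real obstacle here; the only point worth isolating is the observation that the factor $\ell$ introduced by each differentiation is a unit mod $p$ precisely because $\ell < p$, which is exactly why the hypothesis is needed and why the sequence terminates at $0$ instead of continuing to decrease $\ord$ indefinitely or behaving more subtly (as it will when $\ord(x) \ge p$).
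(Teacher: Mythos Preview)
Your proof is correct and follows essentially the same approach as the paper: both arguments reduce to the computation $D_p^{\ell}(x) = a\ell!$ for $x = ap^{\ell}$ with $p \nmid a$ and $\ell < p$, then observe that $p \nmid a\ell!$ forces $D_p^{\ell+1}(x) = 0$. Your version is merely a bit more explicit in separating the case $\ell = 0$ and spelling out the induction, but there is no substantive difference.
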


\begin{proof}
    If $x = 0$, then $D_p(x) = 0$ and thus $D_p^i(x) = 0$ for all $i \geq 0$. Therefore the $D_p$ sequence of $x$ is periodic of period $1$.
    
    Suppose $x \neq 0$. We write $x = ap^{\ell}$ where $p \nmid a$ and $\ord(x) = \ell < p$. We can compute that $D_p^{\ell}(x) = a\ell !$. Since $0 \leq \ell < p$, we know that $p \nmid a\ell!$. This implies that $D_p^{\ell+1}(x) = 0$ and thus $D_p^{i}(x) = 0$ for all $i \geq \ell+1$. Therefore the $D_p$ sequence of $x$ is eventually periodic of period $1$.
\end{proof}

Suppose $\ord(D_p^i(x)) \neq +\infty$ for all $i \geq 1$. To understand how each term in $\ord$ sequence changes, we study the sequence of increments (of consecutive terms) of the $\ord$ sequence 
\[\ord(D_p(x)) - \ord(x), \; \ord(D_p^2(x)) - \ord(D_p(x)), \; \ord(D_p^3(x)) - \ord(D_p^2(x)), \; \ldots\]
and we call this the $\inc$ sequence of $x$.  Recall that for any integer $x \in \mathbb{Z}$, $x \bmod p$ is the unique integer between 0 and $p-1$ that is congruent to $x$ modulo $p$. We denote $\pfloor{x} := x - (x \bmod p)$. In other words, $\pfloor{x}$ is the largest multiple of $p$ that is less than or equal to $x$.

We are ready to handle the case $\ord (x) \geq p$.  

\begin{lemma} \label{lemma:basic2}
    Let $x = ap^{bp^k}$ be a nonzero integer with $p \nmid ab$ and $k \geq 1$. Define $k' := (k-1 \bmod p) + 1$. The first $k'$ terms of the $\inc$ sequence of $x$ are
    \[(k-1, \underbrace{-1, -1, \ldots, -1}_{(k-1 \bmod p) \text{ copies}}).\]
\end{lemma}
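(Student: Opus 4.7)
My plan is to track both the $p$-adic valuation and the prime-to-$p$ coefficient of $D_p^j(x)$ simultaneously, by induction on $j$ for $j = 1, \ldots, k'$. The workhorse identity is that if $y = cp^{\ell}$ with $p \nmid c$ and $\ell \geq 1$, then $D_p(y) = c\ell \cdot p^{\ell - 1}$; hence $\ord(D_p(y)) = \ell - 1 + \ord(\ell)$, so the next increment in the $\ord$ sequence equals $\ord(\ell) - 1$. A direct application to $y = x$ gives $D_p(x) = ab \cdot p^{bp^k + k - 1}$, and since $p \nmid ab$ we get $\ord(D_p(x)) = bp^k + k - 1$, producing the first increment $k - 1$ that matches the first entry of the claimed tuple.

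The heart of the argument is showing by induction that for $1 \leq j \leq k'$,
\[ D_p^j(x) = ab \prod_{i=1}^{j-1}(bp^k + k - i) \cdot p^{bp^k + k - j}, \]
with the leading coefficient coprime to $p$. The inductive step is a one-line application of the workhorse identity to the previous stage, so the real content is the coprimality claim for the coefficient. Since $bp^k + k - i \equiv k - i \pmod{p}$, this reduces to verifying that $k - i \not\equiv 0 \pmod{p}$ for every $1 \leq i \leq k' - 1$.

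The main obstacle is pinning down the role of $k'$. I would argue by two cases that $k'$ is exactly the smallest positive integer congruent to $k$ modulo $p$: when $p \mid k$ one has $k - 1 \bmod p = p - 1$ and so $k' = p$; when $k \bmod p = r$ with $1 \leq r \leq p - 1$ one has $k' = r$. In both cases the smallest positive $i$ with $i \equiv k \pmod{p}$ equals $k'$, so none of $i = 1, \ldots, k' - 1$ is congruent to $k$ modulo $p$, and the coprimality claim is immediate. With the induction established, the successive valuations differ by $-1$ for $j = 2, \ldots, k'$, producing the required $k' - 1 = (k - 1) \bmod p$ trailing copies of $-1$ after the initial $k - 1$.
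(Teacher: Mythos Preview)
Your proof is correct and follows essentially the same approach as the paper: both compute $D_p^j(x) = ab\prod_{i=1}^{j-1}(bp^k+k-i)\,p^{bp^k+k-j}$ for $1 \le j \le k'$ and verify that the coefficient is coprime to $p$ via $bp^k+k-i \equiv k-i \pmod p$. Your explicit identification of $k'$ as the least positive residue of $k$ modulo $p$ makes the coprimality check slightly more transparent than in the paper, but the argument is the same in substance.
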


\begin{proof}
    Since $D_p(x) = abp^{bp^k+k-1}$ and $p \nmid ab$, we know $\ord(D_p(x)) = bp^k+k-1$. If $k' \geq 2$, then $D_p^2(x) = ab(bp^k+k-1)p^{bp^k+k-2}$. Since $p \nmid ab(bp^k+k-1)$, we know $\ord(D^2_p(x)) = bp^k+k-2$. Following this pattern, we know  that when $1 \leq i \leq k'$, the $(i+1)$-th term of the $D_p$ sequence is
    \[ab \prod_{j=1}^{i-1} (bp^k+k-j) p^{bp^k+k-i}\]
    because $p \nmid ab \prod_{j=1}^{i-1} (bp^k+k-j)$ for any $1 \leq i \leq k'$. The first $k'+1$ terms of the $\ord$ sequence are
    \[(bp^k, bp^k+k-1, bp^k+k-2, \ldots, bp^k+k-k').\]
    Therefore the first $k'$ terms of the $\inc$ sequence of $x$ are
    \[(k-1, \underbrace{-1, -1, \ldots, -1}_{(k-1 \bmod p) \text{ copies}}). \qedhere\]
\end{proof}

\begin{corollary} \label{corollary:basecase}
Let $x = ap^{bp^k}$ be a nonzero integer with $p \nmid ab$ and $1 \leq k \leq p$. The $\ord$ sequence and the $\inc$ sequence of $x$ is periodic of period $k$. 
\end{corollary}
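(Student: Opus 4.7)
The plan is to leverage Lemma \ref{lemma:basic2} twice: once to compute the first block of $k$ entries of the $\inc$ sequence, and then again after showing that $D_p^k(x)$ has the same structural form as $x$, which will force the pattern to repeat.

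First I would observe that since $1 \leq k \leq p$, we have $k-1 \in \{0,1,\ldots,p-1\}$, so $k' = (k-1 \bmod p) + 1 = k$. Thus Lemma \ref{lemma:basic2} already furnishes the first $k$ entries of the $\inc$ sequence, namely $(k-1, -1, -1, \ldots, -1)$. The crucial next step is to compute $D_p^k(x)$ explicitly: from the proof of Lemma \ref{lemma:basic2}, taking $i = k$ we obtain
\[
D_p^k(x) = a\,b\prod_{j=1}^{k-1}(bp^k + k - j)\,p^{bp^k} = a' p^{bp^k},
\]
where $a' := ab\prod_{j=1}^{k-1}(bp^k + k - j)$. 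I then need to verify that $a'$ still satisfies the hypothesis $p \nmid a'b$, equivalently that the product $\prod_{j=1}^{k-1}(bp^k+k-j)$ is coprime to $p$. This is where I'd be most careful, though it is straightforward: each factor reduces mod $p$ to $k-j$ with $1 \leq k-j \leq k-1 \leq p-1$, so no factor is divisible by $p$.

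Consequently $D_p^k(x) = a' p^{b' p^k}$ with $b' = b$ and $p \nmid a'b'$, exactly the hypotheses of Lemma \ref{lemma:basic2} with the same $k$. Applying that lemma again then produces the next $k$ entries of the $\inc$ sequence, and they are identical to the first $k$: $(k-1, -1, \ldots, -1)$. Iterating this argument (formally by induction on the number of blocks), the $\inc$ sequence of $x$ is periodic with period dividing $k$. Since the sum of one block of $\inc$-entries is $(k-1) + (k-1)(-1) = 0$, each block of $\ord$-values also returns to $bp^k$, so the $\ord$ sequence is periodic with period dividing $k$.

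Finally I need to rule out a smaller period. For $k=1$ the $\inc$ block is $(0)$ and the period is trivially $1$. For $k \geq 2$, any period $d < k$ would force $\inc$-entry $d+1$ to equal $\inc$-entry $1 = k-1$; but positions $2,3,\ldots,k$ all carry the value $-1 \neq k-1$, a contradiction. Hence the period is exactly $k$ in both sequences. The only delicate point in this whole plan is the coprimality check that keeps the hypothesis of Lemma \ref{lemma:basic2} intact after $k$ differentiations; the rest is a direct iteration of the lemma.
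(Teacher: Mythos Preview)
Your argument is correct and follows essentially the same route as the paper: apply Lemma~\ref{lemma:basic2}, observe that $k'=k$ when $1\le k\le p$, and note that after $k$ steps the $\ord$ value returns to $bp^k$, so the pattern repeats. You are more explicit than the paper in two places---you spell out the coprimality check $p\nmid a'$ needed to reapply Lemma~\ref{lemma:basic2}, and you verify that the period is exactly $k$ rather than a proper divisor---but these are elaborations of the same underlying argument rather than a different approach.
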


\begin{proof}
    If $1 \leq k \leq p$, then $k' = (k-1 \bmod p) + 1 = k-1+1= k$. The first $k+1$ terms of the $\ord$ sequence are
    \[(bp^k, bp^k+k-1, bp^k+k-2, \ldots, bp^k).\] It is now clear that the $\ord$ sequence and the $\inc$ sequence of $x$ is periodic of period $k$.
\end{proof}

\begin{definition}
For any integer $k \geq 1$, we call the following sequence
\[\mathcal{S}_k := (k-1, \underbrace{-1, -1, \ldots, -1}_{(k-1 \bmod p) \text{ copies}})\]
the \emph{$k$-segment}.
\end{definition}

For any integer $\ell \geq p$, we define $\ell_1 := \ord(\pfloor{\ell})$ and for $i \geq 2$
\begin{equation} \label{eqn:steps}  
\ell_{i} := \begin{cases}
\ord(\pfloor{\ell_{i-1}-1}), & \text{if $\ell_{i-1} < +\infty$};\\
+\infty, & \text{if $\ell_{i-1} = +\infty$}.
\end{cases}
\end{equation} 
It is clear that if $\ell_i < +\infty$, then $\ell_{i+1} < \log_p(\ell_i)$. For any integer $\ell \geq p$, there exists a unique positive integer $N = N(\ell)$ such that $1 \leq \ell_N \leq p$, and $\ell_i = +\infty$ for all $i> N$. 

\begin{theorem} \label{theorem:main1}
    Let $x = ap^{\ell}$ be a nonzero integer with $\ell \geq p$ and $p \nmid a$.The $\inc$ sequence of $x$ is of the form 
    \[(\underbrace{-1, -1, \ldots, -1}_{(\ell \bmod p) \text{ copies}}, \mathcal{S}_{\ell_1}, \mathcal{S}_{\ell_2}, \mathcal{S}_{\ell_3}, \ldots, \mathcal{S}_{\ell_N}, \mathcal{S}_{\ell_N}, \mathcal{S}_{\ell_N}, \ldots).\]
    As a result, the $\ord$ sequence and the $\inc$ sequence of $x$ are eventually periodic of period $\ell_{N}$.
\end{theorem}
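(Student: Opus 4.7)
The plan is to identify the $\inc$ sequence as a concatenation of blocks, each produced by Lemma \ref{lemma:basic2} applied to the state left behind by the previous block. First I would handle the initial $(\ell \bmod p)$ entries: whenever $\ord(y)=m$ with $p \nmid m$, one application of $D_p$ multiplies the coefficient by $m$ (still coprime to $p$) and drops the ord by exactly $1$. Iterating $(\ell \bmod p)$ times starting from $x=ap^{\ell}$ produces a state $y_1 = c_1 p^{\pfloor{\ell}}$ with $p \nmid c_1$, which accounts for the initial $(\ell \bmod p)$ copies of $-1$ at the front of the $\inc$ sequence.

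For the remainder I would set up an induction over segments. Let $y_i = c_i p^{M_i}$ denote the state at the start of phase $i \geq 1$, where $M_1 := \pfloor{\ell}$ and $M_{i+1} := M_i + \pfloor{\ell_i - 1}$. The key inductive claim is that $\ord(M_i) = \ell_i$ and $p \nmid c_i$. Granting this, I write $M_i = B_i p^{\ell_i}$ with $p \nmid B_i$ and apply Lemma \ref{lemma:basic2} with $a = c_i$, $b = B_i$, $k = \ell_i$; it delivers the $\ell_i$-segment $\mathcal{S}_{\ell_i}$ as the next block of the $\inc$ sequence and produces the next state $y_{i+1} = c_{i+1} p^{M_{i+1}}$. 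For the ord half of the claim, expand $M_i = \sum_{j=1}^{i} b_j p^{\ell_j}$ with strictly decreasing exponents $\ell_j$, so that $\ord(M_i)$ equals the smallest $\ell_j$, namely $\ell_i$, and $B_i \equiv b_i \not\equiv 0 \pmod{p}$. For the coefficient half, track the multiplier produced by Lemma \ref{lemma:basic2}: it equals $B_i \prod_{j=1}^{k'-1}(B_i p^{\ell_i} + \ell_i - j)$ with $k' = ((\ell_i - 1) \bmod p) + 1$, each factor reducing to $\ell_i - j \pmod{p}$, and a short case analysis on $\ell_i \bmod p$ confirms that none of these residues vanishes for $j \in [1,k'-1]$.

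Finally, since $\ell_{i+1} < \log_p(\ell_i)$, the strictly decreasing sequence $\ell_1 > \ell_2 > \cdots$ reaches some $\ell_N \leq p$ in finitely many steps. Then $\pfloor{\ell_N - 1} = 0$, so $M_{N+1} = M_N$ and the segment $\mathcal{S}_{\ell_N}$ reappears forever (the coefficient may change but remains coprime to $p$), giving eventual periodicity with period dividing $|\mathcal{S}_{\ell_N}| = \ell_N$; exactness of the period follows because $\mathcal{S}_{\ell_N}$ contains a single nonnegative entry $\ell_N - 1$ followed by $-1$'s, so no proper divisor of $\ell_N$ can be a period when $\ell_N \geq 2$ (and $\ell_N = 1$ is trivial). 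The main obstacle I anticipate is the coefficient bookkeeping in the middle paragraph: Lemma \ref{lemma:basic2} produces only one segment at a time, so the induction stands or falls on confirming that the coefficient emerging from one application still satisfies the hypotheses of the next, and this is precisely where the case analysis on $\ell_i \bmod p$ enters.
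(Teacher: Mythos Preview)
Your proposal is correct and follows essentially the same route as the paper: peel off the initial $(\ell\bmod p)$ copies of $-1$, then iterate Lemma~\ref{lemma:basic2} to produce the segments $\mathcal{S}_{\ell_1},\mathcal{S}_{\ell_2},\ldots$ until $\ell_N\le p$, at which point the segment repeats. The paper does the same thing, just with lighter bookkeeping: rather than tracking $M_i$ and verifying $\ord(M_i)=\ell_i$ via the expansion $M_i=\sum_j b_jp^{\ell_j}$, it simply asserts at each stage that the current term can be written as $a_{i-1}p^{b_{i-1}p^{\ell_i}}$ with $p\nmid a_{i-1}b_{i-1}$, and it closes with Corollary~\ref{corollary:basecase} for the periodic tail instead of arguing $M_{N+1}=M_N$ directly. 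Your ``main obstacle'' (that the coefficient emerging from one application of Lemma~\ref{lemma:basic2} remains coprime to $p$) is not really an obstacle---it is already established inside the proof of Lemma~\ref{lemma:basic2}, which shows $p\nmid ab\prod_{j=1}^{i-1}(bp^k+k-j)$ for all $i\le k'$; so your case analysis on $\ell_i\bmod p$ is a harmless re-derivation of that fact rather than new work.
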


\begin{proof}
Let $\ell'_0 = \ell \bmod p$. Then the first $\ell'_0+1$ terms of the $D_p$ sequence of $x$ are
\[(ap^{\ell}, a\ell p^{\ell-1}, a\ell(\ell-1)p^{\ell-2}, \ldots, a \prod_{i=1}^{\ell'_0} (\ell - i) p^{\ell - \ell'_0}).\]
The first $\ell'_0+1$ terms of the $\ord$ sequence of $x$ are
\[(\ell, \ell-1, \ell-2, \ldots, \ell - \ell'_0).\]
Hence the first $\ell'_0$ terms of the $\inc$ sequence of $x$ are
\[(\underbrace{-1, -1, \ldots, -1}_{\ell'_0 \text{ copies}}).\]
Note that $\ell - \ell'_0 = \pfloor{\ell}$ and $\ell_1 = \ord(\pfloor{\ell})$, we can write $D_p^{\ell'_0}(x) = a_0 p^{b_0p^{\ell_1}}$ with $p \nmid a_0b_0$. We can compute $D_p^{\ell_0'+1}(x) = a_0b_0p^{b_0p^{\ell_1}+\ell_1-1}$. Note that $b_0p^{\ell_1} + \ell_1 -1 \bmod p = \ell_1 -1 \bmod p$. 

Let $\ell'_1 := (\ell_1 -1) \bmod p$. The $(\ell'_0+1)$-th term to the $(\ell'_0+\ell'_1+2)$-th term of the $D_p$ sequence of $x$ are
\[(a_0p^{b_0p^{\ell_1}}, a_0b_0p^{b_0p^{\ell_1}+\ell_1-1}, \ldots, a_0b_0\prod_{i=1}^{\ell'_1}(b_0p^{\ell_1}+\ell_1-i)p^{b_0p^{\ell_1}+\ell_1-1-\ell_1'}).  \]
The $(\ell'_0+1)$-th term to the $(\ell'_0+\ell'_1+2)$-th term of the $\ord$ sequence of $x$ are
\[(b_0p^{\ell_1}, b_0p^{\ell_1}+\ell_1 -1, \ldots, b_0p^{\ell_1}+\ell_1-1-\ell_1').\]
The next $\ell_1'+1$ terms of the $\inc$ sequence of $x$ is the $\ell_1$-segment
\[(\ell_1 - 1, \underbrace{-1, -1, \ldots, -1}_{\ell'_1 \text{ copies}}) = \mathcal{S}_{\ell_1}.\]

As $\ell_1 - 1 - \ell_1'  = \pfloor{\ell_1 -1 }$ and $\ell_2 = \ord(\pfloor{\ell_1-1})$, we can write $D_p^{\ell_0'+\ell_1'+1}(x) = a_1p^{b_1p^{\ell_2}}$ with $p \nmid a_1b_1$. There exists a positive integer $N$ such that $1 \leq \ell_N \leq p$. Using induction, we know that the initial terms of the $\inc$ sequence of $x$ is of the form
\[(\underbrace{-1, -1, \ldots, -1}_{(\ell \bmod p) \text{ copies}}, \mathcal{S}_{\ell_1}, \mathcal{S}_{\ell_2}, \mathcal{S}_{\ell_3}, \ldots, \mathcal{S}_{\ell_N}).\]
Corollary \ref{corollary:basecase} implies that if $a_{N-1}p^{b_{N-1}p^{\ell_N}}$, where $p \nmid a_{N-1}b_{N-1}$, is a term in the $D_p$ sequence of $x$, then $\mathcal{S}_{\ell_N}$ will appear repeatedly in the $\inc$ sequence of $x$. This completes the proof of the theorem.
\end{proof}

We summarize our results of Lemma \ref{lemma:basic1} and Theorem \ref{theorem:main1} into the following theorem.

\begin{theorem}
The $\ord$ sequence of any integer $x$ is eventually periodic of period at most $p$.
\end{theorem}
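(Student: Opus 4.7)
The plan is to prove the theorem by a straightforward case analysis that combines Lemma \ref{lemma:basic1} and Theorem \ref{theorem:main1}, covering every integer $x$. There are essentially three disjoint cases to handle: $x = 0$, $x \neq 0$ with $\ord(x) < p$, and $x \neq 0$ with $\ord(x) \geq p$, and in each case the bound on the period follows immediately from results already established.

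First I would dispatch the degenerate cases using Lemma \ref{lemma:basic1}. If $x = 0$, the $D_p$ sequence is identically $0$, so the $\ord$ sequence is the constant sequence $+\infty$, which is periodic of period $1 \leq p$. If $x \neq 0$ and $\ord(x) < p$, Lemma \ref{lemma:basic1} gives that $D_p^i(x) = 0$ for all sufficiently large $i$, so the $\ord$ sequence is eventually the constant $+\infty$ and hence eventually periodic of period $1 \leq p$.

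The main case is $x \neq 0$ with $\ord(x) \geq p$, which is exactly the hypothesis of Theorem \ref{theorem:main1}. Writing $x = a p^\ell$ with $p \nmid a$ and $\ell \geq p$, Theorem \ref{theorem:main1} shows that the $\inc$ sequence is eventually periodic with period $\ell_N$, where $N = N(\ell)$ is the unique integer satisfying $1 \leq \ell_N \leq p$. Since the $\ord$ sequence and the $\inc$ sequence have the same eventual period, the $\ord$ sequence of $x$ is eventually periodic of period $\ell_N \leq p$.

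There is essentially no obstacle here: all of the combinatorial work has already been done in Lemma \ref{lemma:basic1} and Theorem \ref{theorem:main1}. The only minor point worth mentioning is that the theorem should be read with the understanding that $+\infty$ is permitted as a value in the $\ord$ sequence (so constant-$+\infty$ tails count as periodic), so no additional argument is needed to handle the sequences that terminate at $0$.
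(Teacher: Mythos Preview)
Your proposal is correct and follows exactly the same approach as the paper, which simply presents this theorem as a summary of Lemma \ref{lemma:basic1} and Theorem \ref{theorem:main1} without giving any further argument. Your case split and the observation that $1 \leq \ell_N \leq p$ bounds the period are precisely what the paper intends.
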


As an application of Theorem \ref{theorem:main1}, we can prove the following theorem that has a similar structure as Conjecture \ref{conjecture:UA3}.

\begin{theorem} \label{theorem:main2}  
    For any $x \in \mathbb{Z}$, exactly one of the following three cases will happen:
    \begin{enumerate}[\normalfont(i)]
        \item $D_p^{i}(x) = 0$ for sufficiently large $i$;
        \item $D_p^i(x) = ap^p$ where $\gcd(a,p) = 1$ for sufficiently large $i$;
        \item $\displaystyle \lim_{i \to +\infty} D_p^i(x) = \pm \infty$.
    \end{enumerate}
\end{theorem}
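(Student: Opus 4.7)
The plan is to split into three cases based on $\ord(x)$ and invoke the earlier results as black boxes. First, if $x = 0$ or $0 \leq \ord(x) < p$, then Lemma \ref{lemma:basic1} gives $D_p^i(x) = 0$ for sufficiently large $i$, putting us in case (i). So I may assume $\ord(x) \geq p$, where Theorem \ref{theorem:main1} applies and tells us the $\ord$ sequence is eventually periodic with period $\ell_N$ and eventual values of the form $bp^{\ell_N}$ for some $b$ coprime to $p$, where $1 \leq \ell_N \leq p$.

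The key dichotomy within this assumption is whether the $\ord$ sequence ever attains the value $p$ exactly. If $\ord(D_p^i(x)) = p$ for some $i$, write $D_p^i(x) = ap^p$ with $p \nmid a$; a direct computation gives
\[ D_p(ap^p) = \frac{ap^p}{p}\cdot p = ap^p, \]
so $ap^p$ is a fixed point of $D_p$ and $D_p^j(x) = ap^p$ for all $j \geq i$, landing us in case (ii).

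Otherwise, the eventual $\ord$ value $bp^{\ell_N}$ satisfies $bp^{\ell_N} > p$ (either $\ell_N \geq 2$, or $\ell_N = 1$ and $b \geq 2$). I will show that in this situation the coefficient of $D_p^i(x)$ grows without bound. Following the computation in Lemma \ref{lemma:basic2}, once we have reached the periodic regime and $D_p^j(x) = a_j p^{bp^{\ell_N}}$ with $p \nmid a_j b$, one period later we get
\[ D_p^{j+\ell_N}(x) = a_j \cdot b \cdot \prod_{i=1}^{\ell_N - 1}\bigl(bp^{\ell_N} + i\bigr) \cdot p^{bp^{\ell_N}}. \]
Modulo $p$, each factor $bp^{\ell_N}+i$ is just $i$ with $1 \leq i \leq \ell_N - 1 \leq p-1$, so the product is coprime to $p$; this is consistent with the period continuing. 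Moreover, the multiplicative factor $b \prod_{i=1}^{\ell_N - 1}(bp^{\ell_N} + i)$ is a positive integer strictly greater than $1$ unless $b = 1$ and $\ell_N = 1$—but that would force $bp^{\ell_N} = p$, which the present subcase excludes. Hence $|a_{j+k\ell_N}| \to \infty$ as $k \to \infty$, so $|D_p^i(x)| \to \infty$, giving case (iii).

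Finally, the three cases are mutually exclusive since their long-term behaviors differ (limit $0$, nonzero constant $ap^p$, and divergence in absolute value), so exactly one occurs. The main bookkeeping is the coefficient-growth argument in the third case, but it is essentially a direct reprise of the calculation that already appears in Lemma \ref{lemma:basic2} and Corollary \ref{corollary:basecase}, so no new ideas are required.
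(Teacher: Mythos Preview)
Your proof is correct and follows essentially the same route as the paper: handle $\ord(x)<p$ via Lemma~\ref{lemma:basic1}, and for $\ord(x)\ge p$ use the eventual periodicity from Theorem~\ref{theorem:main1} to isolate either the fixed point $ap^p$ (when $b=\ell_N=1$) or a per-period multiplicative growth factor strictly greater than $1$. One small point worth tightening: you deduce $|D_p^i(x)|\to\infty$ from divergence along the subsequence $i=j+k\ell_N$, whereas the paper also notes the monotonicity $|D_p^{i+1}(x)|>|D_p^i(x)|$ once $\ord>p$, which is what lets you pass from the subsequence to the full sequence.
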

\begin{proof}
Suppose $x = 0$ or $0 \leq \ord(x) < p$. Lemma \ref{lemma:basic1} implies that $D_p^{i}(x) = 0$ for sufficiently large $i$.

Suppose $\ord(x) \geq p$. Theorem \ref{theorem:main1} implies that the $\ord$ sequence of $x$ is eventually periodic of period $\ell_{N}$. Let $x_1 = ap^{bp^k}$ be the first term of the periodic cycle where $1 \leq k \leq p$ and $p \nmid ab$. If $b = k = 1$, then $D_p(x_1) = D_p(ap^p) = ap^p = x_1$. We know that $D_p^i(x) = ap^p$ for sufficiently large $i$. Suppose $b >1$ or $k > 1$. If $x_1>0$, then
\[x_1 < D_p(x_1) < D_p^2(x_1) < \cdots < D_p^{\ell_{N_0}-1}(x_1) < D_p^{\ell_{N_0}}(x_1) = \alpha x_1,\]
for some $\alpha > 1$. Therefore \[\lim_{i \to +\infty} D_p^i(x) = \lim_{i \to +\infty} \alpha^ix_1 = +\infty.\]
Similarly, if $x_1<0$, then
\[x_1 > D_p(x_1) > D_p^2(x_1) > \cdots > D_p^{\ell_{N_0}-1}(x_1) > D_p^{\ell_{N_0}}(x_1) = \alpha x_1,\]
for some $\alpha > 1$. Therefore \[\lim_{i \to +\infty} D_p^i(x) = \lim_{i \to +\infty} \alpha^ix_1 = -\infty.\qedhere\]
\end{proof}

Theorem \ref{theorem:main1} gives the form of the $\inc$ sequence of any integer $x$ such that $\ord(x) \geq p$. It is a sequence of nonnegative integers that rapidly decreases until it is less than $p$, after which it is constant, where the first term determines all subsequent terms.  This sequence can be interrupted by strings of $-1$'s, where the length of the string is equal to the  previous nonnegative integer modulo $p$.    
  
The question remains about whether all sequences of the form 
\[(\underbrace{-1, -1, \ldots, -1}_{(\ell \bmod p) \text{ copies}}, \mathcal{S}_{\ell_1}, \mathcal{S}_{\ell_2}, \mathcal{S}_{\ell_3}, \ldots, \mathcal{S}_{\ell_N}, \mathcal{S}_{\ell_N}, \mathcal{S}_{\ell_N}, \ldots)\]
has an $x$ such that this is its $\inc$ sequence.  Since the initial $\ell_1$ determines all the subsequent $\ell_i$'s, the answer is no.  But we do have some control over the length of the interrupting $-1$'s and the number of jumps $N$.   

\begin{theorem} \label{theorem:main_reverse}
Let $N > 0$ be an integer. For any $N+1$ integers $0 \leq i_0, i_1, i_2, \ldots, i_N \leq p-1$, there exists a positive integer $\ell$ such that the $\inc$ sequence of $x = p^{\ell}$ is of the form
\[(\underbrace{-1, -1, \ldots, -1}_{i_0 \text{ copies}}, \mathcal{S}_{\ell_1}, \mathcal{S}_{\ell_2}, \mathcal{S}_{\ell_3}, \ldots, \mathcal{S}_{\ell_N}, \mathcal{S}_{\ell_N}, \mathcal{S}_{\ell_N}, \ldots),\]
such that for all $1 \leq j \leq N$, $\mathcal{S}_{\ell_j}$ is the $\ell_j$-segment with $i_j$ consecutive copies of $-1$.
\end{theorem}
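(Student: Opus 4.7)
The plan is to build $\ell$ by working backwards through the list of terms $\ell_1, \ell_2, \ldots, \ell_N$ prescribed by the recursion \eqref{eqn:steps}. Theorem \ref{theorem:main1} tells us exactly how the $\inc$ sequence of $x = p^\ell$ decomposes: the initial $-1$ block has length $\ell \bmod p$, and the $j$-th segment $\mathcal{S}_{\ell_j}$ carries $(\ell_j - 1) \bmod p$ copies of $-1$. So to match the target data $(i_0, i_1, \ldots, i_N)$ it suffices to construct an $\ell$ with $\ell \equiv i_0 \pmod{p}$ whose associated sequence $\ell_1, \ldots, \ell_N$ (as given by \eqref{eqn:steps}) satisfies $\ell_j \equiv i_j + 1 \pmod{p}$ for each $j \geq 1$, with $1 \leq \ell_N \leq p$ so that the process terminates precisely at step $N$.

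Concretely, I would proceed as follows. First, pick $\ell_N$ to realize $i_N$: if $i_N \leq p-2$, take $\ell_N := i_N + 1$, and if $i_N = p-1$, take $\ell_N := p$. In either case $1 \leq \ell_N \leq p$ and $(\ell_N - 1) \bmod p = i_N$. Next, for $j = N-1, N-2, \ldots, 1$, define
\[ \ell_j := p^{\ell_{j+1}} + i_j + 1. \]
Because $0 \leq i_j \leq p-1$, we have $\pfloor{\ell_j - 1} = p^{\ell_{j+1}}$, so $\ord(\pfloor{\ell_j - 1}) = \ell_{j+1}$, matching \eqref{eqn:steps}; moreover $(\ell_j - 1) \bmod p = i_j$. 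Finally, set
\[ \ell := p^{\ell_1} + i_0, \]
so that $\ell \bmod p = i_0$ and $\pfloor{\ell} = p^{\ell_1}$, giving $\ord(\pfloor{\ell}) = \ell_1$. Since $\ell_{j+1} \geq 1$ forces $\ell_j \geq p + 1 > p$ for every $j < N$, the recursion \eqref{eqn:steps} applied to $\ell$ reproduces exactly the chosen values $\ell_1, \ldots, \ell_N$, and stops there because $1 \leq \ell_N \leq p$ makes $\pfloor{\ell_N - 1} = 0$.

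To finish, I would simply invoke Theorem \ref{theorem:main1} on $x = p^\ell$ (with $a = 1$ and the above $\ell$, which satisfies $\ell \geq p$). The theorem then says the $\inc$ sequence of $x$ has the advertised shape, and the congruences arranged above ensure that the initial $-1$ block has length $i_0$ and that each $\mathcal{S}_{\ell_j}$ contributes exactly $i_j$ copies of $-1$, as required.

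The only real subtlety is the boundary case $i_N = p - 1$, which forces the choice $\ell_N = p$ rather than $\ell_N = p$ versus $\ell_N = 0$; this is why the construction of $\ell_N$ is bifurcated. Everything else amounts to the observation that adding an integer in $\{0, 1, \ldots, p-1\}$ to a pure prime power $p^m$ has no effect on $\pfloor{\cdot}$ and controls the residue mod $p$ independently, which is exactly the leverage needed to decouple the congruence condition from the $\ord$ condition at each stage of the backward construction.
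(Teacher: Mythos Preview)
Your proof is correct and follows essentially the same backward construction as the paper: the paper sets $k_N = i_N + 1$, $k_j = p^{k_{j+1}} + i_j + 1$ for $j < N$, and $\ell = k_0 - 1$, which is identical to your $\ell_j$'s and $\ell = p^{\ell_1} + i_0$. Your invocation of Theorem~\ref{theorem:main1} at the end is slightly cleaner than the paper's direct reverification, and note that your ``bifurcation'' for $\ell_N$ is not actually a case split, since $i_N + 1 = p$ when $i_N = p-1$; a single formula $\ell_N = i_N + 1$ covers both cases.
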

\begin{proof}
Given any sequence of $N+1$ nonnegative integers $0 \leq i_0, i_1, i_2, \dots, i_N \leq p-1$, we construct a sequence $k_N, k_{N-1}, \dots, k_1, k_0$ as follows. Let $k_N = i_N + 1$ and $k_j = p^{k_{j+1}} + i_j + 1$ for $0 \leq j \leq N-1$. Let $\ell := k_0-1$ and $x = p^{k_0-1}$. We first show that $\ell_j = k_j$ for all $1 \leq j \leq N$. By definition of $\ell_1$, we can compute \[\ell_1 = \ord(\pfloor{\ell}) = \ord(\pfloor{k_0-1}) = \ord(\pfloor{p^{k_1}+i_0}) = \ord(p^{k_1}) = k_1.\]
Suppose for $2 \leq j \leq m-1 < N$, we have $\ell_j = k_j$. Then we have
\[\ell_m = \ord(\pfloor{\ell_{m-1}-1}) = \ord(\pfloor{k_{m-1}-1}) = \ord(\pfloor{p^{k_m}+i_{m-1}}) = \ord(p^{k_m}) = k_m.\]
It remains to show that the $\inc$ sequence of $x$ is of the desired form. As $k_0 -1 = p^{k_1}+i_0$, it is clear that the first $i_0$ terms of the $\inc$ sequence of $x$ are $-1$ and $\ord(D^{i_0}_p(x))=p^{k_1}$. Suppose $D_p^{i_0}(x) = \alpha p^{p^{k_1}}$. Set $k_1':= (k_1-1 \bmod p)+1 = (i_1 \bmod p) + 1 = i_1 + 1$. By Lemma \ref{lemma:basic2}, we know that the next $k_1'$ terms of the $\inc$ sequence is a $k_1$-segment $\mathcal{S}_{k_1} = \mathcal{S}_{\ell_1}$. The number of consecutive $-1$ in this $k_1$-segment $\mathcal{S}_{k_1}$ is equal to $k_1'-1=i_1$ as desired. The proof is complete by induction.
\end{proof}

For any nonzero integer $x = ap^{\ell}$ with $p \nmid a$ and $\ell \geq 1$, Theorem \ref{theorem:main1} implies that the $\ord$ sequence increases (not necessarily consecutively) exactly $N-1$ times before it reaches the periodic cycle. Therefore it is impossible for the $\ord$-sequence to continue to increase indefinitely.  However, we see that the number of jumps can be made arbitrarily large.  And as the following corollary tells us, we can even prescribe these jumps to be consecutive. 

\begin{corollary}
For any integer $N \geq 1$, there exists $x \in \mathbb{Z}$ such that \[\ord(x) < \ord(D_p(x)) < \cdots < \ord(D_p^N(x)).\]  
\end{corollary}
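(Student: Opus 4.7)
The plan is to derive this corollary as a direct application of Theorem \ref{theorem:main_reverse}. The inequality chain
\[\ord(x) < \ord(D_p(x)) < \cdots < \ord(D_p^N(x))\]
is precisely the assertion that the first $N$ entries of the $\inc$ sequence of $x$ are each strictly positive. So my task reduces to producing an $x$ whose $\inc$ sequence begins with $N$ positive entries.

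First I would feed Theorem \ref{theorem:main_reverse} the parameters $i_0 = i_1 = \cdots = i_{N-1} = 0$ and $i_N = 1$; any value in $\{1, \ldots, p-1\}$ would do in place of $1$ for $i_N$. The choice $i_0 = 0$ removes the initial block of $-1$'s, and the choices $i_j = 0$ for $1 \leq j \leq N-1$ force each segment $\mathcal{S}_{\ell_j}$ to consist of just the single entry $\ell_j - 1$ (no trailing $-1$'s). Consequently the first $N$ entries of the resulting $\inc$ sequence are $\ell_1 - 1,\, \ell_2 - 1,\, \ldots,\, \ell_{N-1} - 1$, followed by the leading term $\ell_N - 1$ of the segment $\mathcal{S}_{\ell_N}$.

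Next I would verify positivity using the recursion $k_N = i_N + 1$, $k_j = p^{k_{j+1}} + i_j + 1$ built into the proof of Theorem \ref{theorem:main_reverse}. With the chosen parameters, $\ell_j = k_j = p^{k_{j+1}} + 1 \geq p + 1$ for $1 \leq j \leq N-1$, and $\ell_N = k_N = 2$, so each of the first $N$ increments is at least $1$ and the strict chain follows immediately for the integer $x = p^{\ell}$ produced by the theorem.

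The hard part, if any, is just the parameter selection; all the real work is already contained in Theorem \ref{theorem:main_reverse}. The one subtlety worth flagging is that I must take $i_N \geq 1$: the all-zero tuple would give $\ell_N = 1$ and hence $\mathcal{S}_{\ell_N} = (0)$, which would only produce a weak inequality at step $N$ and break the strict chain.
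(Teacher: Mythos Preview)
Your proof is correct and uses the same approach as the paper---apply Theorem \ref{theorem:main_reverse} with parameters chosen so that the $\inc$ sequence begins with $N$ positive entries. The paper's one-line proof takes $i_0 = i_1 = \cdots = i_N = 0$; your choice $i_0 = \cdots = i_{N-1} = 0$, $i_N = 1$ differs only at the last index, and your final remark correctly identifies why: the all-zero tuple yields $\ell_N = 1$ and hence a zero increment at step $N$, giving only $\ord(D_p^{N-1}(x)) \leq \ord(D_p^N(x))$ rather than strict inequality. So your version is a small but genuine improvement over the paper's stated parameter choice (alternatively, the paper's choice works verbatim if one applies Theorem \ref{theorem:main_reverse} with $N$ replaced by $N+1$).
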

\begin{proof}
Use Theorem \ref{theorem:main_reverse} by choosing $i_0 = i_1 = \cdots = i_{N} = 0$.
\end{proof}

\section{Anti-partial derivatives}

Not every integer has an anti-partial derivative with respect to $p$. We would like to have a criterion to determine when an integer does have an anti-partial derivative (Question 1). If we know an integer has an anti-partial derivative, a natural extension of Question 1 is how many anti-partial derivatives can it have (Question 3)? In order to answer that, it would be useful to know when two integers have the same partial derivatives (Question 2). Furthermore, if we know that there exists at least one integer that has exactly $n$ anti-partial derivatives, then we can also ask how many integers there are that have exactly $n$ anti-partial derivatives (Question 4). In this section, we will attempt to answer these four questions:
\begin{enumerate}[1.]
    \item When does an integer have an anti-partial derivative?
    \item When do two integers have the same partial derivatives?
    \item How many anti-partial derivatives can an integer have?
    \item For any positive integer $n$, how many integers that have exactly $n$ anti-partial derivatives?
\end{enumerate}
For any nonzero integer $x$, if $\ord(x) > 0$, then there exist unique integers $a \neq 0$, $b > 0$, and $k \geq 0$ with $p \nmid ab$ such that $x = ap^{bp^k}$. We call this the \emph{standard form} of $x$.

\subsection{When does an integer have an anti-partial derivative?}

We start this subsection with an example of a family of integers that does not have an anti-partial derivative.

\begin{example} \label{example:no_antipd}
Suppose $y = a_0p^{p-1}$ for some integer $a_0$ where $p \nmid a_0$. If $D_p(x) = y$, then it is clear that $x \neq 0$ and $\ord(x) > 0$. Let $x = ap^{bp^k}$ be in standard form such that $D_p(x) = a_0p^{p-1}$. Then $abp^{bp^k+k-1} = a_0p^{p-1}$. This implies that $bp^k+k = p$. As $b \geq 1$, we know that $k = 0$. Therefore $b = p$ and this contradicts to $p \nmid ab$. Therefore $a_0p^{p-1}$ does not have an (integral) anti-partial derivative (with respect to $p$).
\end{example}

The main goal of this subsection is to determine when an integer $y$ has an anti-partial derivative. We first consider two simple cases.
\begin{itemize}
    \item If $y=0$, then $D_p(a) = 0$ for all $a \in \mathbb{Z}$ with $p \nmid a$.
    \item If $\ord(y) = 0$, then $D_p(x) = y$ if and only if $x = yp$.
\end{itemize}
Now we assume that $\ord(y) > 0$. If $D_p(x) = y$, then $\ord(x) > 0$. 

\begin{lemma} \label{lemma:antider_basic}
     Let $y = a_0p^{\ell_0}$ where $p \nmid a_0$ and $\ell_0 >0$. For any $x = ap^{bp^k} \in \mathbb{Z}$ in standard form, $D_p(x) = y$ if and only if
     \begin{align}
         ab &= a_0, \label{eq:bk1}\\
        bp^k+k-1 &= \ell_0. \label{eq:bk2}
    \end{align}
\end{lemma}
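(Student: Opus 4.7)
My plan is to prove this by direct computation using the definition $D_p(x) = (x/p)\cdot\ord(x)$ together with the uniqueness of $p$-adic factorization for nonzero integers. The equivalence should reduce to matching the ``unit part'' and the ``$p$-part'' of $D_p(x)$ with those of $y$.

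First I would substitute the standard form $x = ap^{bp^k}$ into the definition of $D_p$. Since $p \nmid a$ and $b \geq 1$, we have $\ord(x) = bp^k$, so
\[
D_p(x) \;=\; \frac{ap^{bp^k}}{p}\cdot bp^k \;=\; ab\,p^{\,bp^k + k - 1}.
\]
Because $x$ is in standard form, $p \nmid ab$, so this is actually the standard factorization of $D_p(x)$ with unit part $ab$ and $p$-exponent $bp^k+k-1$ (assuming this exponent is nonnegative; I would need to note that $\ell_0 > 0$ forces $bp^k+k-1 \geq 1$, so there is no degenerate case to worry about, and in particular $D_p(x) \neq 0$).

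For the forward direction, assuming $D_p(x) = y = a_0 p^{\ell_0}$ with $p \nmid a_0$, I would invoke uniqueness of the expression ``unit times power of $p$'' for a nonzero integer to conclude $ab = a_0$ and $bp^k + k - 1 = \ell_0$, i.e.\ equations \eqref{eq:bk1} and \eqref{eq:bk2}. The reverse direction is an immediate substitution back into the displayed formula for $D_p(x)$.

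Honestly, there is no real obstacle here: the lemma is essentially a bookkeeping step that isolates the two Diophantine conditions on $(a,b,k)$ which will drive the remaining analysis of anti-partial derivatives. The only mild care required is to observe that the hypothesis $\ell_0 > 0$ guarantees $bp^k+k-1 \geq 1$, so the exponent of $p$ in $D_p(x)$ is well-defined and positive, and to record that the coprimality conditions $p \nmid ab$ and $p \nmid a_0$ are exactly what makes the comparison of unit parts with $p$-parts legitimate.
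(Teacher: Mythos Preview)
Your argument is correct and follows exactly the paper's approach: compute $D_p(x)=ab\,p^{bp^k+k-1}$ directly from the definition and the standard form, then match the prime-to-$p$ part and the $p$-exponent with those of $y$ using uniqueness of factorization. The paper's proof is the same one-line computation, without the extra remarks about nonnegativity of the exponent (which in any case is automatic from $b\geq 1$, $k\geq 0$ in the standard form).
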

\begin{proof}
Since $D_p(x) = abp^{bp^k+k-1}$, it is clear that $abp^{bp^k+k-1} = a_0p^{\ell_0}$ if and only if \eqref{eq:bk1} and \eqref{eq:bk2} hold.
\end{proof}

Using Lemma \ref{lemma:antider_basic}, we can show that any nonzero integer has finitely many anti-partial derivatives. This is already known as a result of \cite[Theorem 1]{HMT1}.

\begin{corollary} \label{corollary:finite_partial}
For each nonzero $y \in \mathbb{Z}$, $\{x \in \mathbb{Z} : D_p(x) = y\}$ is finite (possibly empty).
\end{corollary}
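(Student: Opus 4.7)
The plan is to reduce the corollary to a direct counting argument based on Lemma \ref{lemma:antider_basic}. First I would dispose of the trivial case $\ord(y) = 0$: if $D_p(x) = y$ and $y \neq 0$, then $x$ cannot satisfy $p \nmid x$ (else $D_p(x) = 0$), so $x$ has a standard form $x = ap^{bp^k}$. The equation $bp^k + k - 1 = 0$ forces $k = 0$ and $b = 1$, and then $a = y$, giving the unique anti-partial derivative $x = yp$. So the interesting case is $\ord(y) > 0$, which I would handle next.

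Assume $y = a_0 p^{\ell_0}$ with $p \nmid a_0$ and $\ell_0 \geq 1$. By Lemma \ref{lemma:antider_basic}, each anti-partial derivative of $y$ corresponds to a triple $(a, b, k)$ with $p \nmid ab$, $b \geq 1$, $k \geq 0$, $ab = a_0$, and $bp^k + k - 1 = \ell_0$. The key observation is that the second equation bounds $k$: since $b \geq 1$, we get $p^k \leq bp^k \leq \ell_0 + 1$, and therefore $k \leq \log_p(\ell_0 + 1)$. So there are only finitely many candidate values of $k$.

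For each such $k$, the equation determines $b$ uniquely as $b = (\ell_0 - k + 1)/p^k$, which either yields a valid positive integer with $p \nmid b$ or no solution at all. Whenever $b$ is valid, $a$ is in turn determined by $a = a_0/b$, provided this is an integer with $p \nmid a$. In either case, each $k$ contributes at most one triple $(a, b, k)$, so the total number of anti-partial derivatives of $y$ is at most $\lfloor \log_p(\ell_0 + 1) \rfloor + 1$, which is finite.

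I do not anticipate any real obstacle here: the argument is essentially bookkeeping once Lemma \ref{lemma:antider_basic} is in hand, since the defining equation $bp^k + k - 1 = \ell_0$ is strictly increasing in both $b$ and $k$ and thus admits only finitely many solutions in positive integers. The only mild subtlety is remembering to treat $k = 0$ separately (where the standard form still makes sense with $b = \ell_0 + 1$) and to note that the hypothesis $y \neq 0$ is essential, since $D_p(x) = 0$ holds for every $x$ coprime to $p$.
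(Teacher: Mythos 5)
Your proposal is correct and follows essentially the same route as the paper: invoke Lemma \ref{lemma:antider_basic}, bound $k$ logarithmically in $\ell_0$ from the equation $bp^k + k - 1 = \ell_0$, and note that $b$ is then determined by $k$ and $a$ by $b$. The extra bookkeeping you add (the explicit bound $\lfloor \log_p(\ell_0+1)\rfloor + 1$ and the separate treatment of $\ord(y)=0$) is consistent with, and slightly more detailed than, the paper's argument.
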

\begin{proof}
If $\ord(y) = 0$, then $D_p(x) = y$ if and only if $x = yp$. If $\ord(y) > 0$, then \eqref{eq:bk2} implies that $k \leq \log_p(\ell_0)$. Since $b$ is determined by $k$ by \eqref{eq:bk2} and $a$ is determined by $b$ by \eqref{eq:bk1}, there are only finitely many $a,b,k$ such that \eqref{eq:bk1} and \eqref{eq:bk2} hold.
\end{proof}

\begin{corollary} \label{coro:k_linear}
For any two nonzero integers $x_1$ and $x_2$ with $\ord(x_1), \ord(x_2) > 0$, let $x_1 = a_1p^{b_1p^{k_1}}$ and $x_2 = a_2p^{b_2p^{k_2}}$ be in standard form such that $D_p(x_1) = D_p(x_2)$. The following three statements are equivalent.
\[ \text{\normalfont (i) } k_1 < k_2 \qquad \text{\normalfont (ii) } b_1p^{k_1} > b_2p^{k_2} \qquad \text{\normalfont (iii) } b_1 > b_2  \]
If $D_p(x_1)=D_p(x_2)>0$, then the above three statements are further equivalent to the following two statements.
\[ \text{\normalfont (iv) } a_1 < a_2 \qquad \text{\normalfont (v) } x_1 < x_2\]
\end{corollary}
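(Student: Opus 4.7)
The plan is to leverage the two identities supplied by Lemma \ref{lemma:antider_basic}: writing $D_p(x_1) = D_p(x_2) = a_0 p^{\ell_0}$ with $p \nmid a_0$, the lemma gives
\[ a_1 b_1 = a_2 b_2 = a_0 \qquad \text{and} \qquad b_1 p^{k_1} + k_1 = b_2 p^{k_2} + k_2 = \ell_0 + 1. \]
Every equivalence in the corollary will be squeezed out of these two identities together with the standard-form constraints $b_1, b_2 \geq 1$.

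The first step is (i) $\Leftrightarrow$ (ii), which is immediate from rearranging the second identity as $b_1 p^{k_1} - b_2 p^{k_2} = k_2 - k_1$: the left side is positive exactly when $k_1 < k_2$. For (i) $\Rightarrow$ (iii), I would start from $b_1 p^{k_1} > b_2 p^{k_2}$ and factor out $p^{k_1}$ to get $b_1 > b_2 p^{k_2 - k_1} \geq 2 b_2 > b_2$, using $p \geq 2$ and $b_2 \geq 1$. The converse (iii) $\Rightarrow$ (i) falls out by trichotomy on $k_1, k_2$: equality of the $k_i$ forces $b_1 = b_2$, whereas $k_1 > k_2$ would yield $b_2 > b_1$ by the implication just proved with the roles of $1$ and $2$ swapped; both cases contradict $b_1 > b_2$.

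Now assume $D_p(x_1) = D_p(x_2) > 0$, so $a_0 > 0$ and therefore $a_1, a_2 > 0$ (both $b_i$ being positive). Then (iii) $\Leftrightarrow$ (iv) is immediate from the positive identity $a_1 b_1 = a_2 b_2$. The subtlest step, and where I expect the main obstacle, is (v), because $x_i = a_i p^{b_i p^{k_i}}$ mixes the coefficient $a_i$ and the exponent $b_i p^{k_i}$, which move in opposite directions as $k_1, k_2$ vary. My plan is to use the second identity to rewrite $x_i = a_i p^{\ell_0 + 1 - k_i}$ and compute
\[ \frac{x_1}{x_2} = \frac{a_1}{a_2} \cdot p^{k_2 - k_1} = \frac{b_2}{b_1} \cdot p^{k_2 - k_1}, \]
using $a_1/a_2 = b_2/b_1$. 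But condition (ii) says exactly $b_1 > b_2 p^{k_2 - k_1}$, which is equivalent to saying the ratio above is less than $1$; so (i) $\Rightarrow$ (v). The reverse direction follows by the same trichotomy argument: $k_1 = k_2$ would give $x_1 = x_2$, and $k_1 > k_2$ would reverse the inequality, so $x_1 < x_2$ forces $k_1 < k_2$.
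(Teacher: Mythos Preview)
Your proposal is correct and follows essentially the same approach as the paper: both arguments extract the two identities $a_1b_1=a_2b_2$ and $b_1p^{k_1}+k_1=b_2p^{k_2}+k_2$ from Lemma~\ref{lemma:antider_basic}, derive (i)$\Leftrightarrow$(ii) directly from the exponent identity, obtain (iii) via $b_1>b_2p^{k_2-k_1}$, and handle (iv) and (v) in the positive case by the coefficient identity together with the ratio $x_1/x_2=(b_2/b_1)p^{k_2-k_1}$. The only cosmetic difference is that the paper pushes that ratio one step further to $b_2p^{k_2}/(b_1p^{k_1})$, making the equivalence with (ii) immediate without a separate trichotomy step; your trichotomy argument accomplishes the same thing.
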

\begin{proof}
If $D_p(x_1) = D_p(x_2)$, then we have 
\begin{align}
     a_1b_1 &= a_2b_2  \label{eq:bk3}\\
    b_1p^{k_1}+k_1-1 &= b_2p^{k_2}+k_2-1, \label{eq:bk4}
\end{align}
\eqref{eq:bk4} implies that (i) and (ii) are equivalent. If (i) and (ii) hold, then (iii) holds. If (iii) holds, then \eqref{eq:bk4} implies that (i) holds. 

Now suppose $D_p(x_1)=D_p(x_2)>0$. Hence $a_1, a_2, x_1, x_2 > 0$. \eqref{eq:bk3} implies that (iii) and (iv) are equivalent. It remains to show that (v) is equivalent to (ii). Note that
\[\dfrac{x_1}{x_2} = \dfrac{a_1}{a_2} \cdot \dfrac{p^{b_1p^{k_1}}}{p^{b_2p^{k_2}}} = \dfrac{b_2}{b_1} p^{b_1p^{k_1}-b_2p^{k_2}} = \dfrac{b_2}{b_1} p^{k_2-k_1} = \dfrac{b_2p^{k_2}}{b_1p^{k_1}}.\]
Therefore (v) is equivalent to (ii).
\end{proof}

If $D_p(x_1)=D_p(x_2) < 0$, then (iv) and (v) of Corollary \ref{coro:k_linear} need to be replaced by $a_1 > a_2$ and $x_1 > x_2$ respectively.

\subsection{When do two integers have the same partial derivatives?}

Anti-partial derivatives do not exist uniquely. For example
\[D_p(p^{p^{p+1}+p}) = D_p( (p^p+1) p^{p^{p+1}}  ).\]
The main goal of this subsection is to determine when two integers have the same partial derivatives. If $D_p(x) = 0$, then $x = 0$ or $\ord(x) = 0$. This means that the set of anti-partial derivatives of 0 is 
\[\{x \in \mathbb{Z} \, : \, p \nmid x\} \, \cup \, \{0\}.\]
Now we suppose that $\ord(x) > 0$ and let $y = D_p(x)$. By Corollary \ref{coro:k_linear}, there is an element $x_0$ in the set of all anti-partial derivative of $y$ with the smallest value $k$. We call $x_0$ the \emph{primitive} anti-partial derivative of $y$ and denote $x_0 = a_0p^{b_0p^{k_0}}$ in the standard form.

\begin{theorem} \label{theorem:primitive}
Let $x_0 = a_0p^{b_0p^{k_0}} \in \mathbb{Z}$ be the standard form of the primitive anti-partial derivative of $y = D_p(x_0)$. Define
\[C = C(x_0) := \Big\{c \in [0, b_0) \cap \mathbb{Z} \, :  \, p^{ p^{k_0} c } \midd (b_0-c) , \;  \frac{b_0-c}{p^{p^{k_0}c}} \mid a_0b_0 \Big\}.\]
There is a one-to-one correspondence between $C$ and the set of all anti-partial derivatives of $y$. Furthermore, if $x = ap^{bp^k}$ (in its standard form) is an anti-partial derivative of $y$, then there exists a unique $c \in C$ such that
\[k = p^{k_0}c+k_0, \quad b = \frac{b_0-c}{p^{k-k_0}} = \frac{b_0-c}{p^{p^{k_0}c}} \in \mathbb{Z}, \quad a = \frac{a_0b_0}{b} \in \mathbb{Z}.\] 
\end{theorem}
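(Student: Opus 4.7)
My plan is to show directly that the map $c \mapsto x = a p^{b p^k}$ given by the stated formulas is a bijection from $C$ onto the set of anti-partial derivatives of $y$, by using Lemma~\ref{lemma:antider_basic} as the fundamental constraint and then exploiting the minimality of $k_0$.

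First, I would apply Lemma~\ref{lemma:antider_basic} to $y = D_p(x_0) = a_0 b_0 p^{b_0 p^{k_0}+k_0-1}$ to record that any standard-form anti-partial derivative $x = a p^{b p^k}$ of $y$ must satisfy
\begin{equation*}
    ab = a_0 b_0, \qquad b p^k + k = b_0 p^{k_0} + k_0.
\end{equation*}
Since $x_0$ is primitive, Corollary~\ref{coro:k_linear} yields $k \geq k_0$. Rearranging the second equation gives $k - k_0 = p^{k_0}(b_0 - b p^{k-k_0})$, so $p^{k_0}$ divides $k - k_0$. Setting $c := (k-k_0)/p^{k_0}$, one gets $k = p^{k_0} c + k_0$ and $b_0 - c = b p^{p^{k_0} c}$. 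Because $b \geq 1$ and $p \nmid b$, this forces $c \in [0,b_0)$ and $p^{p^{k_0}c} \midd (b_0 - c)$, so $b = (b_0-c)/p^{p^{k_0}c}$; the relation $ab = a_0 b_0$ then gives $a = a_0 b_0 / b$, and divisibility by $b$ in $\mathbb{Z}$ is exactly the second defining condition of $C$. So $c \in C$ and the map $x \mapsto c$ is well-defined.

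For the reverse direction, I would start with an arbitrary $c \in C$ and verify that the prescribed triple $(a,b,k)$ gives a standard-form integer with $D_p(a p^{b p^k}) = y$. The conditions defining $C$ make $a$ and $b$ integers, the condition $p^{p^{k_0}c} \midd (b_0-c)$ ensures $p \nmid b$, and since $p \nmid a_0 b_0$ we conclude $p \nmid a$ as well. Plugging in, $b p^k = (b_0-c) p^{k_0}$ and hence $b p^k + k = b_0 p^{k_0} + k_0$, while $ab = a_0 b_0$ by construction; Lemma~\ref{lemma:antider_basic} then gives $D_p(a p^{b p^k}) = y$. The two constructions are inverses of one another by inspection, establishing the bijection.

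The argument is essentially a bookkeeping exercise once the two equations from Lemma~\ref{lemma:antider_basic} are written down, so there is no real obstacle; the only slightly delicate point is recognizing that the divisibility $p^{k_0} \mid (k - k_0)$ is forced by the exponent equation and that this is what parameterizes $c$. Care must be taken to check the standard-form condition $p \nmid ab$ at each stage, which uses the exactness of the divisibility $p^{p^{k_0}c} \midd (b_0-c)$ (not merely divisibility) built into the definition of $C$.
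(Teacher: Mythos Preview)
Your proposal is correct and follows essentially the same approach as the paper's proof: both establish the bijection by showing that an arbitrary anti-partial derivative yields a unique $c\in C$ via the exponent equation $k-k_0=p^{k_0}(b_0-bp^{k-k_0})$, and conversely that each $c\in C$ produces a valid standard-form anti-partial derivative. The only cosmetic differences are that the paper treats the case $x=x_0$ (i.e.\ $c=0$) separately and records the incidental fact $p\nmid c$ for $c>0$, whereas you handle all $c$ uniformly; neither affects the argument.
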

\begin{proof}
We show that any anti-partial derivative $x=ap^{bp^k}$ of $y$ is associated with a unique $c \in C$. If $x = x_0$, then we associated $x$ with $c=0$. If $x \neq x_0$, since $x_0$ is the primitive anti-partial derivative, we know that $k > k_0$. Then $p^{k_0}(b_0 - bp^{k-k_0}) = k-k_0$. Since $p \nmid bb_0$, we know that $p \nmid (b_0-bp^{k-k_0})$. Therefore $p^{k_0} \midd (k-k_0)$. Let $k-k_0 = p^{k_0}c$ where $c > 0$ and $p \nmid c$. By plugging $k-k_0=p^{k_0}c$ into $p^{k_0}(b_0 - bp^{k-k_0}) = k-k_0$, we get $b_0 - bp^{k-k_0} = c$. Since $bp^{k-k_0} > 0$, we know that $c<b_0$. Since $p \nmid b$, we know that $p^{k-k_0} \midd (b_0 - c)$. Since $a \in \mathbb{Z}$, we also have $b \mid a_0b_0$, that is, $\frac{b_0-c}{p^{p^{k_0}c}} \mid a_0b_0$.

Then we show that for each $c \in C$, we can define a unique $x=ap^{bp^k}$ such that $D_p(x) = y$. Since $c < b_0$ and $p^{p^{k_0} c} \midd (b_0-c) $, we know that $b > 0$ is an integer and $\text{ord}_p(b) = 0$. Since $b \mid a_0b_0$, we know that $a \in \mathbb{Z}$ and $\text{ord}_p(a) = 0$ as well. We can compute
\[bp^k+k-1 = \frac{b_0-c}{p^{k-k_0}} p^k + k -1 = (b_0-c)p^{k_0} + p^{k_0}c+k_0-1 = b_0p^{k_0} + k_0 -1.\]
Therefore
\[D_p(x) = abp^{bp^k+k-1} = a_0b_0p^{b_0p^{k_0}+k_0-1} = D_p(x_0) = y. \qedhere\]
\end{proof}

\subsection{How many anti-partial derivatives can an integer have?}

By Corollary \ref{corollary:finite_partial}, we know that every nonzero integer can only have finitely many anti-partial derivatives. This partially answers Question 3 but we can further ask for any given positive integer $n$, is there an integer $x$ with exactly $n$ anti-partial derivatives? The answer is yes. In order to prove that, we will construct $k_0$ such that $x_0$ is the primitive anti-partial derivative of $D_p(x_0)$ and then construct $a_0$ and $b_0$ so that $D_p(x_0)$ has exactly $n$ anti-partial derivatives. We first show that for certain values of $k_0$, $x_0$ is always the primitive anti-partial derivative of $D_p(x_0)$ no matter how $a_0$ and $b_0$ are defined as long as $p \nmid a_0b_0$.

\begin{lemma} \label{lemma:define_k}
   Fix an integer $m \geq 2$ and let $k_0 := p + p^2 + p^3 + \cdots + p^m$. For any integers $a_0, b_0$ with $b_0 > 0$ such that $p \nmid a_0b_0$, let $x_0 := a_0p^{b_0p^{k_0}}$. Then $x_0$ is the primitive anti-partial derivative of $D_p(x_0)$.
\end{lemma}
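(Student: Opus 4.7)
The plan is to show that any anti-partial derivative $x = ap^{bp^k}$ of $y := D_p(x_0)$, written in standard form, must satisfy $k \geq k_0$; since $x_0$ itself realizes $k = k_0$, it will then be the one with the smallest $k$, hence primitive. By Lemma \ref{lemma:antider_basic}, such an $x$ satisfies $ab = a_0 b_0$ and $bp^k + k = b_0 p^{k_0} + k_0$. I would argue from the latter equation alone, so that the conclusion is genuinely uniform in $a_0$ and $b_0$.

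First I would handle $k = 0$: the equation forces $b = b_0 p^{k_0} + k_0$, and since $k_0 = p(1 + p + \cdots + p^{m-1})$ is a multiple of $p$, so is $b$, contradicting $p \nmid b$. For $1 \leq k < k_0$, rewriting as $bp^k - b_0 p^{k_0} = k_0 - k$ and using $k < k_0$ shows that $p^k \mid (k_0 - k)$ is forced; combined with $p \nmid b$, this sharpens to the exact divisibility $p^k \midd (k_0 - k)$.

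The heart of the proof is therefore the purely numerical inequality
\[ \ord(k_0 - k) < k \qquad \text{for every } 1 \leq k < k_0, \]
which I would establish by a careful analysis of base-$p$ expansions. Because $k_0$ has base-$p$ digit $1$ in positions $1, 2, \ldots, m$ and $0$ elsewhere, a $k$ with $p \nmid k$ gives $\ord(k_0 - k) = 0 < 1 \leq k$, and a $k$ with $\ord(k) > m$ forces $k \geq p^{m+1}$, which already exceeds $m + 1 \geq \ord(k_0 - k)$; so both extremes are immediate.

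The main obstacle is the intermediate range, and in particular the extremal case $k = p + p^2 + \cdots + p^j$ for some $1 \leq j \leq m - 1$ (an initial sub-sum of the base-$p$ digits of $k_0$), where $k_0 - k = p^{j+1} + \cdots + p^m$ attains the largest possible $p$-adic valuation $j + 1$. The required estimate $j + 1 < p + p^2 + \cdots + p^j$ is where the specific form of $k_0$ and the hypothesis $m \geq 2$ are really used; a short induction on $j$ handles this case and, a fortiori, all other $k$'s in the intermediate range, completing the proof.
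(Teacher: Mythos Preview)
Your proposal is correct and follows essentially the same route as the paper. The paper likewise reduces to the impossibility of $p^k \midd (k_0 - k)$ for $0 \le k < k_0$, and dispatches it via the observation that any positive $\ell$ with $p^k \mid (k_0 - \ell)$ satisfies $\ell \ge p + p^2 + \cdots + p^{k-1} > k$ --- which is precisely your extremal inequality $j+1 < p + p^2 + \cdots + p^j$ with the roles of $k$ and $\ord(k_0-k)$ interchanged.
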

\begin{proof}
Suppose $x_0$ is not the primitive anti-partial derivative of $D_p(x_0)$. Let $x = ap^{bp^k}$ be an anti-partial derivative of $D_p(x_0)$ such that $0\leq k<k_0$. Since $D_p(x) = D_p(x_0)$, we get $bp^k+k=b_0p^{k_0}+k_0$. This means that $p^k \midd k_0 - k$. It is clear that $k \neq 0$ because $p \midd k_0 - 0$. It is also clear that $k \neq 1$ because $p \nmid k_0 -1$. Suppose $k \geq 2$. If $p^k \midd k_0 - \ell$ for some $\ell \in \mathbb{Z}$, then $\ell \geq p + p^2 + \cdots p^{k-1} > k$. Therefore there does not exist an anti-partial derivative $x=ap^{bp^k}$ with $k<k_0$. This means that $x_0$ is the primitive anti-partial derivative of $D_p(x_0)$.
\end{proof}

We now construct $b_0$ such that there are exactly $n-1$ different possible values of $c \in (0,b_0) \cap \mathbb{Z}$ such that $p^{p^{k_0}c} \midd (b_0-c)$ for any $k_0 \geq 0$. This means that the set $C(x_0)$ (as defined in Theorem \ref{theorem:primitive}) has at most $n$ elements (including $0$).

\begin{lemma} \label{lemma:define_b}
Fix integers $n > 0$ and $k_0 \geq 0$. Let $c_1 = 0$. For $2 \leq i \leq n+1$, let $c_i := p^{p^{k_0}c_{i-1}} + c_{i-1}$ and $b_0 := c_{n+1}$. We have
\[\{c \in (0, b_0) \cap \mathbb{Z} \; : \; p^{p^{k_0}c} \midd b_0-c \} = \{c_2, \ldots, c_n\}. \]
\end{lemma}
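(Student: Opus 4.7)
My plan is to prove both inclusions of the claimed set equality by exploiting the telescoping structure of the recursively defined sequence. First, I would note that $0 = c_1 < c_2 < \cdots < c_{n+1} = b_0$ is strictly increasing, since each consecutive difference $c_i - c_{i-1} = p^{p^{k_0}c_{i-1}}$ is positive; in particular, every $c_i$ with $2 \leq i \leq n$ automatically lies in $(0, b_0) \cap \mathbb{Z}$, making it a candidate element of the set on the left-hand side.

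For the forward inclusion $\{c_2, \ldots, c_n\} \subseteq \{c \in (0, b_0) \cap \mathbb{Z} : p^{p^{k_0}c} \midd b_0 - c\}$, I would rewrite $b_0 - c_i$ as the telescoping sum
\[ b_0 - c_i \;=\; c_{n+1} - c_i \;=\; \sum_{j=i}^{n} (c_{j+1} - c_j) \;=\; \sum_{j=i}^{n} p^{p^{k_0} c_j}. \]
Because $p^{k_0} c_i < p^{k_0} c_{i+1} < \cdots < p^{k_0} c_n$, factoring out the lowest power $p^{p^{k_0} c_i}$ leaves an expression of the form $1 + p\cdot(\text{integer})$, which is coprime to $p$. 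Hence $\ord(b_0 - c_i) = p^{k_0} c_i$, which is exactly the condition $p^{p^{k_0} c_i} \midd b_0 - c_i$.

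For the reverse inclusion, I would take any $c \in (0, b_0) \cap \mathbb{Z}$ satisfying $p^{p^{k_0}c} \midd b_0 - c$, locate the unique index $i$ with $c_i \leq c < c_{i+1}$ (necessarily $1 \leq i \leq n$), and argue $c = c_i$. Suppose toward contradiction that $c > c_i$, and write $b_0 - c = (b_0 - c_i) - (c - c_i)$. By the preceding paragraph the first summand has valuation $p^{k_0} c_i$, while $0 < c - c_i < c_{i+1} - c_i = p^{p^{k_0} c_i}$ forces $\ord(c - c_i) < p^{k_0} c_i$. Since these valuations differ, the non-archimedean property yields $\ord(b_0 - c) = \ord(c - c_i) < p^{k_0} c_i \leq p^{k_0} c$, contradicting $p^{p^{k_0} c} \mid b_0 - c$. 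Hence $c = c_i$, and so $c \in \{c_2, \ldots, c_n\}$.

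I do not anticipate a substantial obstacle: the essential mechanism is that the exponents $p^{k_0} c_j$ grow so rapidly that within the telescoping sum the lowest-order term completely dictates the $p$-adic valuation, so that within each half-open interval $[c_i, c_{i+1})$ only the left endpoint $c_i$ can meet the exact-divisibility condition $p^{p^{k_0} c} \midd b_0 - c$.
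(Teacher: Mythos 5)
Your proof is correct and takes essentially the same route as the paper: the telescoping identity $b_0 - c_i = \sum_{j=i}^{n} p^{p^{k_0}c_j}$ gives the inclusion $\supseteq$, and localizing $c$ between consecutive $c_i$'s gives $\subseteq$. The only cosmetic difference is that you close the contradiction via the ultrametric inequality applied to $b_0 - c = (b_0 - c_i) - (c - c_i)$ on the interval $[c_i, c_{i+1})$, whereas the paper works on $(c_{i-1}, c_i)$ and compares $c_i - c = \ell p^{p^{k_0}c}$ against the recursion $c_i = p^{p^{k_0}c_{i-1}} + c_{i-1}$; both arguments hinge on the same rapid growth of the exponents.
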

\begin{proof}
First, we prove $\supseteq$. By the definition of $c_j$, we know that $c_{j+1} > c_j$ and $p^{p^{k_0}c_{j}} \midd c_{j+1} - c_{j}$ for all $1 \leq j \leq n$. For $1 \leq i \leq n$, we have \[b_0 - c_i = \sum_{j=i}^{n} (c_{j+1} - c_{j}) = \sum_{j=i}^{n} p^{p^{k_0}c_j}.\] Therefore $p^{p^{k_0}c_i} \midd b_0-c_i$.

Second, we prove $\subseteq$. Let $c \in (0, b_0)$ be an integer such that $p^{p^{k_0}c} \midd b_0 - c$. If $c \not\in \{c_2, \dots, c_{n}\}$, then there exists a unique $3 \leq i \leq n+1$ such that $c_{i-1} < c < c_i$. As 
\[b_0 - c = \sum_{j=i}^{n} (c_{j+1} - c_{j}) + (c_i - c) = \sum_{j=i}^{n} p^{p^{k_0}c_j} + (c_i - c).\]
Since $c < c_j$ for all $i \leq j \leq n$, we have $p^{p^{k_0}c} \mid p^{p^{k_0}c_j}$. Since $p^{p^{k_0}c} \midd b_0 - c$, we know that $p^{p^{k_0}c} \midd c_i - c$. Let $\ell > 0$ with $p \nmid \ell$ such that $c_i = \ell p^{p^{k_0}c} + c$. By definition of $c_i$, we know that $c_i = p^{p^{k_0}c_{i-1}} + c_{i-1}$. This means that \[\ell p^{p^{k_0}c} + c = p^{p^{k_0}c_{i-1}} + c_{i-1}.\]
Since $c_{i-1} < c$, we know that $\ell p^{p^{k_0}c} < p^{p^{k_0}c_{i-1}}$. But this is not possible as $\ell \geq 1$ and $c> c_{i-1}$. Hence there does not exist an integer $c \in (0, b_0)$ such that  $p^{p^{k_0}c} \midd b_0 - c$ and $c \notin \{c_2, \dots, c_{n}\}$. This completes the proof of $\subseteq$.
\end{proof}

In order for $D_p(x_0)$ to have exactly $n$ anti-partial derivatives, we want to construct $a_0$ so that \[\frac{b_0-c}{p^{p^{k_0}c}} \mid a_0b_0\] for every $0 < c < b_0$ satisfying $p^{p^{k_0}c} \midd b_0 - c$. This can be achieved by the next lemma.

\begin{lemma} \label{lemma:define_a}
Fix integers $n > 0$ and $k_0 \geq 0$. Let $c_1, c_2, \dots, c_{n+1} = b_0$ be defined as in Lemma \ref{lemma:define_b}. For each $2 \leq i \leq n$, let $a_i := (b_0 - c_i) / p^{p^{k_0}c_i}$ and let $a_0 := \prod_{j=2}^n a_j$. Then $p \nmid a_0$ and the set $C(x_0)$ (as defined in Theorem \ref{theorem:primitive}) has exactly $n$ elements.
\end{lemma}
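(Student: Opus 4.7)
The plan is to assemble the conclusion directly from the construction and the previous lemma: the set of $c$ satisfying the first condition in $C(x_0)$ is already identified by Lemma \ref{lemma:define_b}, and the $a_0$ built as a product of the $a_j$'s is designed precisely to force the second condition.

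First I would handle the easy claim $p \nmid a_0$. By the definition of $a_i = (b_0-c_i)/p^{p^{k_0}c_i}$ and the relation $p^{p^{k_0}c_i} \midd (b_0-c_i)$ established in Lemma \ref{lemma:define_b}, we have $\ord(a_i) = 0$ for each $2 \leq i \leq n$. Since $p$ is prime, $p \nmid a_0 = \prod_{j=2}^n a_j$.

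Next I would identify $C(x_0)$. A preliminary observation is that $p \nmid b_0$: directly from the recursion, $c_2 = p^{p^{k_0}\cdot 0} + 0 = 1$, and for $i \geq 2$ the increment $c_{i+1}-c_i = p^{p^{k_0}c_i}$ has positive exponent $p^{k_0}c_i \geq 1$, hence is divisible by $p$. A simple induction gives $b_0 = c_{n+1} \equiv c_2 = 1 \pmod p$. In particular $c = 0$ satisfies the first condition of $C(x_0)$, since $p^{p^{k_0}\cdot 0}=1 \midd b_0$. Combined with Lemma \ref{lemma:define_b}, the full set of $c \in [0,b_0) \cap \mathbb{Z}$ with $p^{p^{k_0}c} \midd (b_0-c)$ is exactly $\{0, c_2, c_3, \ldots, c_n\}$, an $n$-element set.

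Finally I would verify that each such $c$ also satisfies the second condition $(b_0-c)/p^{p^{k_0}c} \mid a_0 b_0$. For $c = 0$ this reduces to $b_0 \mid a_0 b_0$, which is trivial. For $c = c_i$ with $2 \leq i \leq n$, the quotient is precisely $a_i$, and $a_i \mid a_0$ by construction of $a_0$ as the product of all the $a_j$'s, hence $a_i \mid a_0 b_0$. Therefore $C(x_0) = \{0, c_2, \ldots, c_n\}$ has exactly $n$ elements, completing the proof.

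There is no real obstacle here; the work has been front-loaded into Lemmas \ref{lemma:define_k} and \ref{lemma:define_b}. The only item that requires a brief independent argument is confirming $p \nmid b_0$ so that $c=0$ is admissible, which follows from the observation above; after that the two conditions defining $C(x_0)$ match the construction verbatim.
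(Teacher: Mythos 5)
Your proof is correct and follows essentially the same route as the paper's: invoke Lemma \ref{lemma:define_b} to pin down the $c$'s satisfying the first condition, observe that each quotient $a_i$ divides $a_0$ by construction so the second condition holds, and note $p\nmid a_i$ gives $p\nmid a_0$. Your explicit check that $b_0\equiv 1\pmod p$ (so that $c=0$ genuinely belongs to $C(x_0)$ and the standard form is valid) is a small point the paper leaves implicit, but it does not change the argument.
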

\begin{proof}
Lemma \ref{lemma:define_b} shows that $C(x_0) \subseteq \{c_1, c_2, \dots, c_n\}$. For each $2 \leq i \leq n$, we know that $(b_0 - c_i) / p^{p^{k_0}c_i} = a_i$ divides $a_0$ by the definition of $a_0$. Hence it divides $a_0b_0$. Since $p \nmid a_i$ for $2 \leq i \leq n$, we know that $p \nmid a_0$. Therefore $C(x_0) = \{c_1, c_2, \dots, c_n\}$.
\end{proof}

Now we are ready to give a fuller answer to Question 3 as well as an answer to Question 4 at the same time.

\begin{theorem} \label{theorem:infinite_derivative}
For each positive integer $n$, there are infinitely many integers $x_0$ such that $D_p(x_0)$ has exactly $n$ anti-partial derivatives.
\end{theorem}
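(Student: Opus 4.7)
The plan is to assemble the three preceding lemmas (\ref{lemma:define_k}, \ref{lemma:define_b}, \ref{lemma:define_a}) into a single construction, and then produce an infinite family by varying a free parameter. Concretely, for each integer $m \geq 2$, I would set $k_0 := p + p^2 + \cdots + p^m$. By Lemma \ref{lemma:define_k}, any $x_0 = a_0 p^{b_0 p^{k_0}}$ with $p \nmid a_0 b_0$ is automatically the primitive anti-partial derivative of its own derivative $D_p(x_0)$. This means the enumeration of anti-partial derivatives of $D_p(x_0)$ is exactly governed by the set $C(x_0)$ from Theorem \ref{theorem:primitive}, so the task reduces to building $a_0, b_0$ so that $|C(x_0)| = n$.

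To control $|C(x_0)|$, I would first invoke Lemma \ref{lemma:define_b} with the given $n$ and the chosen $k_0$ to produce $b_0 = c_{n+1}$, so that the set of $c \in (0, b_0)$ satisfying the divisibility condition $p^{p^{k_0}c} \midd b_0 - c$ is exactly $\{c_2, \ldots, c_n\}$. Then I would apply Lemma \ref{lemma:define_a} to this $b_0$ to define $a_0 := \prod_{j=2}^{n} a_j$, where $a_j = (b_0 - c_j)/p^{p^{k_0} c_j}$. Lemma \ref{lemma:define_a} guarantees $p \nmid a_0$ and that every such $c_j$ in fact satisfies the second condition $\frac{b_0-c_j}{p^{p^{k_0}c_j}} \mid a_0 b_0$, so that $C(x_0) = \{0, c_2, c_3, \ldots, c_n\}$ has exactly $n$ elements. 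By Theorem \ref{theorem:primitive}, $D_p(x_0)$ then has exactly $n$ anti-partial derivatives.

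Finally, to produce infinitely many such $x_0$, I would let $m$ range over all integers $m \geq 2$. Each value of $m$ gives a strictly larger $k_0$, hence a strictly larger $b_0 p^{k_0}$ (since $b_0 \geq 1$), hence a strictly larger exponent of $p$ in $x_0 = a_0 p^{b_0 p^{k_0}}$. In particular, the resulting integers $x_0$ are pairwise distinct (they have distinct $p$-adic valuations), and each one has the property that $D_p(x_0)$ has exactly $n$ anti-partial derivatives. No step here requires new ideas: the work has all been done in the three preparatory lemmas, and the only subtle point, which is ensuring that the primitivity of $x_0$ is preserved as one varies $m$, is exactly what Lemma \ref{lemma:define_k} was designed to handle. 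Thus the main ``obstacle'' is purely bookkeeping: one must verify that the $b_0$ and $a_0$ produced by Lemmas \ref{lemma:define_b} and \ref{lemma:define_a} are compatible with the hypotheses of Lemma \ref{lemma:define_k} (namely $b_0 > 0$ and $p \nmid a_0 b_0$), which follows immediately from the inductive definition of the $c_i$ and the explicit formula for $a_0$.
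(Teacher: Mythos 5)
Your proposal is correct and follows essentially the same route as the paper: fix $k_0 = p + p^2 + \cdots + p^m$ via Lemma \ref{lemma:define_k} to guarantee primitivity, build $b_0$ and $a_0$ via Lemmas \ref{lemma:define_b} and \ref{lemma:define_a} so that $|C(x_0)| = n$, and let $m$ vary to obtain infinitely many distinct $x_0$. Your added checks (that $p \nmid a_0 b_0$ and that the resulting $x_0$ are pairwise distinct) are correct and slightly more explicit than the paper's own write-up.
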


\begin{proof}
Lemma \ref{lemma:define_k} implies that there are infinitely many $k_0$ such that $x_0$ is the primitive anti-partial derivative of $D_p(x_0)$. Lemmas \ref{lemma:define_b} and \ref{lemma:define_a} imply that for any $k_0$, we can construct $a_0$ and $b_0$ so that $x_0 = a_0p^{b_0p^{k_0}}$ is the primitive anti-partial derivative of $D_p(x_0)$ and $D_p(x_0)$ has exactly $n$ anti-partial derivatives. Therefore, for each positive integer $n$, there exists infinitely many integers $x_0 = a_0p^{b_0p^{k_0}}$ such that $D_p(x_0)$ has exactly $n$ anti-partial derivatives.
\end{proof}

\begin{remark} \label{remark:rational}
For any integer $x_0$, let $\mathcal{D}_{\mathbb{Z},p}(x_0)$ (resp., $\mathcal{D}_{\mathbb{Q},p}(x_0)$) be the set of all integral (resp., rational) anti-partial derivatives of $D_p(x_0)$. It is clear that $\mathcal{D}_{\mathbb{Z}}(x_0) \subset \mathcal{D}_{\mathbb{Q}}(x_0)$. Therefore Theorem \ref{theorem:infinite_derivative} implies that \cite[Conjecture 27]{HMT1} is false.
\end{remark}

Theorem \ref{theorem:infinite_derivative} remains true even if we allow rational anti-partial derivatives. We give a brief explanation as follows. Let $x_0 := a_0p^{b_0p^{k_0}}$ be defined as in Lemma \ref{lemma:define_k}, so $x_0$ is the primitive anti-partial derivative of $D_p(x_0)$ (even among all the rational anti-partial derivatives) with $b_0 > 0$. Let $x \neq x_0$ be a rational anti-partial derivative of $D_p(x_0)$. We can write $x = ap^{bp^{k}}$ where $0 \neq a \in \mathbb{Q}$, $0 \neq b \in \mathbb{Z}$, $k \in \mathbb{Z}$, $k \geq 0$, and $\ord(ab) \neq 0$. Since $x_0$ is the primitive anti-partial derivative of $D_p(x_0)$, we have $k > k_0$. As $D_p(x) = D_p(x_0)$, we have $b_0p^{k_0}+k_0-1 = bp^{k}+k-1$, or $p^{k_0}(b_0-bp^{k-k_0}) = k - k_0$. If $b<0$, then $k-k_0 < -bp^{k-k_0} < b_0 - bp^{k-k_0} < p^{k_0}(b_0 - bp^{k-k_0})$. This is a contradiction, so $b>0$. This is saying that if $x_0$ is the primitive rational anti-partial derivative of $D_p(x_0)$ with $\ord(x_0) > 0$, then any other rational anti-partial derivative $x$ of $D_p(x_0)$ also satisfies $\ord(x) > 0$. In this case, there is a one-to-one correspondence between the set of all rational anti-partial derivatives of $D_p(x_0)$ and the following set
\[C_{\mathbb{Q}}(x_0) := \Big\{c \in [0, b_0) \cap \mathbb{Z} \, :  \, p^{ p^{k_0} c } \midd (b_0-c)\Big\}.\]
If we compare $C_{\mathbb{Q}}(x_0)$ with $C(x_0)$ defined in Theorem \ref{theorem:primitive}, we have dropped the extra condition $\frac{b_0-c}{p^{p^{k_0}c}} \mid a_0b_0$. This is because we no longer need $a = a_0b_0/b$ to be an integer. Therefore Lemma \ref{lemma:define_b} still applies, that is, we can find $b_0$ such that $C_{\mathbb{Q}}(x_0)$ has exactly $n$ elements for any positive integer $n$. Hence, for any positive integer $n$, there are infinitely many integers $x_0$ such that $D_p(x_0)$ has exactly $n$ \emph{rational} anti-partial derivatives.

\section{Acknowledgments}
The authors would like to thank the  anonymous referee for carefully reading  the manuscript and the editor-in-chief for many useful comments and suggestions.

\bigskip
\hrule
\bigskip

\noindent 2010 {\it Mathematics Subject Classification}: Primary 11A25; Secondary 11A41, 11Y55.

\medskip

\noindent \emph{Keywords: arithmetic derivative, arithmetic partial derivative.}

\bigskip
\hrule
\bigskip

\noindent (Concerned with sequence \seqnum{A003415}.)


\begin{thebibliography}{9}

\bibitem{HMT1}
P.\ Haukkanen, J.\ K.\ Merikoski, and T.\ Tossavainen, On arithmetic partial differential equations, \emph{J. Integer Seq.} \textbf{19} (2016), \href{https://cs.uwaterloo.ca/journals/JIS/VOL19/Tossavainen/tossa6.pdf}{Article 16.8.6}.

\bibitem{HMT2}
P.\ Haukkanen, J.\ K.\ Merikoski, and T.\ Tossavainen, Arithmetic subderivative and Leibniz-additive functions, \emph{Ann. Math. Inform.} \textbf{50} (2019), 145--157.

\bibitem{Kovic}
J.\ Kovi\v{c}, The arithmetic derivative and antiderivative, \emph{J. Integer Seq.} \textbf{15} (2012), \href{https://cs.uwaterloo.ca/journals/JIS/VOL15/Kovic/kovic4.pdf}{Article 12.3.8}.

\bibitem{PS}
R.\ K.\ Pandey and R.\ Saxena, On some conjectures about arithmetic partial differential equations, \emph{J. Integer Seq.} \textbf{20} (2017), \href{https://cs.uwaterloo.ca/journals/JIS/VOL20/Pandey/pandey10.pdf}{Article 17.5.2}.

\bibitem{abc}
P.\ Scholze and J.\ Stix, Why $abc$ is still a conjecture, 2018. Available at \url{https://www.math.uni-bonn.de/people/scholze/WhyABCisStillaConjecture.pdf}.

\bibitem{Ufnarovski}
V.\ Ufnarovski and B.\ {\AA}hlander, How to differentiate a number, \emph{J. Integer Seq.} \textbf{6} (2003), \href{https://cs.uwaterloo.ca/journals/JIS/VOL6/Ufnarovski/ufnarovski.pdf}{Article 03.3.4}.

\end{thebibliography}
\end{document}